\date{}
\newcommand{\beqa}{\begin{eqnarray*}}
\newcommand{\eeqa}{\end{eqnarray*}}
\newcommand{\beqn}{\begin{eqnarray}}
\newcommand{\eeqn}{\end{eqnarray}}
\newcommand{\R}{\mathbb R}
\newcommand{\N}{\mathbb N}
\newcommand{\al}{\alpha}
\newcommand{\be}{\beta}
\newcommand{\de}{\delta}
\newcounter{cnt1}
\newcounter{cnt2}
\newcounter{cnt3}
\newcommand{\blr}{\begin{list}{$($\roman{cnt1}$)$}
 {\usecounter{cnt1} \setlength{\topsep}{0pt}
 \setlength{\itemsep}{0pt}}}
\newcommand{\bla}{\begin{list}{$($\alph{cnt2}$)$}
 {\usecounter{cnt2} \setlength{\topsep}{0pt}
 \setlength{\itemsep}{0pt}}}
\newcommand{\bln}{\begin{list}{$($\arabic{cnt3}$)$}
 {\usecounter{cnt3} \setlength{\topsep}{0pt}
 \setlength{\itemsep}{0pt}}}
\newcommand{\el}{\end{list}}
\newtheorem{thm}{Theorem}[section]
\newtheorem{lem}[thm]{Lemma}
\newtheorem{cor}[thm]{Corollary}
\newtheorem{ex}[thm]{Example}
\newtheorem{Def}[thm]{Definition}
\newtheorem{Prop}[thm]{Proposition}
\newtheorem{rem}[thm]{Remark}
\newcommand{\Rem}{\begin{rem} \rm}
\newcommand{\bdfn}{\begin{Def} \rm}
\newcommand{\edfn}{\end{Def}}
\newcommand{\TFAE}{the following assertions are equivalent: }
\newcommand{\ba}{\begin{array}}
\newcommand{\ea}{\end{array}}
\tikzstyle{vertex}=[scale=0.9,auto=left,circle,fill=black!10,inner
\begin{document}

\title[Weight-partially greedy bases and Weight-Property $(A)$]
{Weight-partially greedy bases and Weight-Property $(A)$}

\author[D. Khurana]{Divya Khurana}
\address[Divya Khurana]{Department of Mathematics\\ The Weizmann Institute of Science,
Rehovot\\ Israel, \textit{E-mail~:}
\textit{divya.khurana@weizmann.ac.il, divyakhurana11@gmail.com}}

\subjclass[2000]{46B15; 41A65}

\keywords{$w$-left Property $(A)$, $w$-right Property $(A)$, $w$-partially greedy basis, $w$-reverse
partially greedy basis}

\begin{abstract}
In this paper, motivated by the notion of $w$-Property $(A)$ defined
in \cite{BDKOW}, we introduce the notions of  $w$-left Property
$(A)$ and  $w$-right Property $(A)$. We also introduce the notions
of $w$-partially greedy basis (using a characterization of partially
greedy basis from \cite{DK}) and $w$-reverse partially greedy basis.
The main aim of this paper is to study $(i)$ some characterizations
of $w$-partially greedy and $w$-reverse partially greedy basis
$(ii)$ conditions on the weight sequences when $w$-left Property
$(A)$ and (or) $w$-right Property $(A)$ implies $w$-Property $(A)$.
\end{abstract}

\thanks {This work was supported by the Israel
Science Foundation (Grant Number--137/16).}

\maketitle

\section{Introduction}

Let $X$ be a real Banach space with a normalized (Schauder) basis
$(e_n)$ and biorthogonal functionals $(e_n^*)$. If $A\subset\N$ then
$|A|$ denotes the cardinality of $A$ and $1_A=\sum_{i\in A} e_i$.
For $A,B\subset \N$ we write $A<B$ if $\max A<\min B$, if $x\in X$
then we write $supp(x)=\{n:e_n^*(x)\not=0\}$, $P_A(x)=\sum_{i\in A}
e_i^*(x)e_i$ and $P_{A^c}(x)=I-P_A(x)$. Set $\N^m=\{A \subset
\N:~|A|=m\}$, $\N^{< \infty}=\cup_{m=0}^{\infty}\N^m$,
$\mathcal{A}=\{(\varepsilon_i): \varepsilon_i=1~ \mbox{or}~
\varepsilon_i=-1 \mbox{~for ~all}~i\}$ and $1_{\varepsilon
A}=\sum_{i\in A}\varepsilon_i e_i$ for $A\subset \N$,
$\varepsilon\in \mathcal{A}$.

The \textbf{Thresholding Greedy Algorithm (TGA)} was introduced by Konyagin and Temlyakov (\cite{KT}).
This algorithm is defined as follow. For $x\in X$ and $m\in\N$, let $\Lambda_m(x)$ be the set of any $m$-indices such that
\begin{align*}
\underset{n\in \Lambda_m(x)}{\min}|e_n^*(x)|\geq \underset{n\not\in
\Lambda_m(x)}{\max}|e_n^*(x)|.
\end{align*}

Then $G_m(x)=\sum_{n\in \Lambda_m(x)}e_n^*(x)e_{n}$ is called an $m$-th greedy approximant to $x$.

Konyagin and Temlyakov in \cite{KT} defined a basis $(e_n)$ to be
$C$-\textbf{greedy} if for all $x\in X$, $m\in \N$, we have
\begin{align*}
\|x-G_m(x)\| \leq C\inf~\left\{ \left
\|x-\underset{i\in A}{\sum}a_ie_i\right\|:~|A|=m,~a_i\in \R,~i\in
A\right\}.
\end{align*}

They showed that greedy bases are characterized by unconditionality and democratic property. Recall that a basis $(e_n)$ of
a Banach space $X$ is unconditional if any rearrangement of the series
$x=\underset{n\geq1}{\sum}e_n^*(x)e_n$ converges in norm to $x$ for all $x\in X$.
A basis $(e_n)$  is said to be democratic if there exists a constant $C \geq 1$ such that $\|1_A\|\leq C \|1_B\|$ where $A,B\in \N^{<\infty}$
and $|A|\leq |B|$.

Konyagin and Temlyakov (\cite{KT}) also introduced the notion of
\textbf{quasi-greedy} basis. A basis $(e_n)$ of a Banach space $X$
is said to be $C$-quasi-greedy if $\|G_m(x)\|\leq C\|x\|$ for all
$x\in X$ and $m\in \N$. Let $C_q$ be the least constant among all
such $C$. Later, Wojtaszczyk \cite{W} proved that a basis is
quasi-greedy if $G_m(x)\longrightarrow x$ as $m\longrightarrow
\infty$ for  $x\in X$.

Dilworth et al. \cite{S} introduced  the notions of  \textbf{almost greedy} basis and \textbf{partially
greedy} basis. A basis $(e_n)$ of a Banach space $X$ is said to
be \textbf{$C$-almost greedy} if
\begin{align*}
\|x-G_m(x)\| \leq C \inf \{\|x-P_A(x)\|:~|A|\leq m\}
\end{align*}
for all $x\in X$, $m\in \N$.

A basis $(e_n)$ is said to be \textbf{$C$-partially greedy}  if for all $x\in X $ and $m\in \N$, we have
\begin{align}\label{defpartial1}
\|x-G_m(x)\| \leq C
\left\|\sum_{n=m+1}^{\infty}e_i^*(x)e_i\right\|.
\end{align}
In \cite{S} the authors proved that  almost greedy
bases  are characterized by quasi-greediness and democracy and partially greedy bases are characterized by
quasi-greediness and the \textbf{conservative} property.  A basis $(e_n)$  of a
Banach space is said to be conservative if there exists a constant
$C\geq 1$ such that $\|1_A\|\leq C\|1_B\|$ whenever $A<B$
and $|A|\leq |B|$.

If $x\in X$ then a \textbf{greedy ordering} for $x$  is a $1-1$ map $\rho:\N\longrightarrow \N$ such
that $supp(x) \subset \rho(\N)$ and if $j<k$, then
$|e_{\rho(j)}^*(x)|\geq |e_{\rho(k)}^*(x)|$. Note that if $x$ has infinite support and the nonzero basis coefficients of $x$  are distinct then $x$ has a unique greedy ordering.

If $\rho$ is a greedy ordering for $x\in X$, then
$\Lambda_m(x)=\{\rho(1),\cdots, \rho(m)\}$ and we denote $\al_m(x)=
\min \Lambda_m(x)$, $\be_m(x)= \max \Lambda_m(x)$.

Recently, in \cite{DK}, the authors gave another characterization of
partially greedy basis and introduced the notion of reverse
partially greedy basis. They proved that a basis $(e_n)$ is
\textbf{partially greedy} if and only if there exists a constant $C$
such that
\begin{align}\label{defpartial2}
\| x-G_m(x)\|\leq C \inf\{ \| x-P_A (x) \|:~|A|\leq
m,~A<\al_m(x)\}
\end{align}
for all $x\in X $ and $m\in \N$.

Throughout this paper we will use this characterization of the partially
greedy bases from \cite{DK}.

A basis $(e_n)$ is said to be \textbf{$C$-reverse partially greedy}  if  for all $x\in X $ and $m\in \N$, we have
\begin{align}\label{defreversepartial}
\| x-G_m(x)\|\leq C \inf\{ \| x-P_A (x) \|:~|A|\leq
m,~A>\be_m(x)\}.
\end{align}

A weight sequence $w=(w_n)$ is a sequence of positive real numbers.
For any $A\subset \N$ we write $w(A)=\sum_{i \in A} w_i$.

In \cite{KPT} the authors studied weight-greedy bases. Recently, in \cite{DKTW} and \cite{BDKOW}, the authors studied  weight-almost greedy bases and weight-partially greedy bases (using the definition from \cite{S}). In \cite{KPT},  \cite{DKTW} and \cite{BDKOW} the authors proved a criterion for weight-greedy, weight-almost greedy and weight-partially greedy bases similar to the one for greedy, almost greedy and partially greedy bases respectively. In \cite{BDKOW} the notion of $w$-Property $(A)$ was introduced and a characterization of $(i)$ $w$-greedy bases in terms of unconditionality and $w$-Property $(A)$ $(ii)$ $w$-almost greedy bases in terms of quasi-greediness and $w$-Property $(A)$ was proved.

In this paper we will introduce the notion of $w$-partially greedy
bases (using the characterization~(\ref{defpartial2})) and
$w$-reverse partially greedy bases. In section 2 we will prove that
a basis is $w$-partially greedy ($w$-reverse partially greedy) if
and only if it is both quasi-greedy and $w$-conservative
($w$-reverse conservative). In section 3 we will  introduce the
notion of $w$-left Property $(A)$ and $w$-right Property $(A)$. We
will prove another characterization of  $w$-partially greedy
($w$-reverse partially greedy) bases, precisely, we will prove that
a basis is $w$-partially greedy ($w$-reverse partially greedy) if
and only if it is quasi-greedy and satisfies $w$-left Property $(A)$
($w$-right Property $(A)$). In this section, we will also consider
some conditions on the weight sequences such that any basis
satisfying both $w$-left Property $(A)$ and $w$-right Property $(A)$
satisfies $w$-Property $(A)$. In the last section we will study
$w$-conservative and $w$-reverse conservative basis.
$w$-conservative and $w$-reverse conservative properties are weaker
conditions than $w$-left Property $(A)$ and $w$-right Property $(A)$
respectively.

Let
\begin{align}\label{partially greedy}
{\stackrel{\sim}\sigma}_{w(\Lambda_m(x))}^L(x)=\inf\{ \| x-P_A (x)
\|:A\in\N^{< \infty}, ~w(A)\leq w(\Lambda_m(x)),~A<\al_m(x)\}
\end{align}

and

\begin{align}\label{reverse partially greedy}
{\stackrel{\sim}\sigma}_{w(\Lambda_m(x))}^R(x)=\inf\{ \| x-P_A (x)
\|:A\in\N^{< \infty}, ~w(A)\leq w(\Lambda_m(x)),~\be_m(x)<A\}
\end{align}

for any $x\in X,~m\in \N$.

\bdfn We say that a basis $(e_n)$ is \textbf{$C$-$w$-partially greedy} if
\begin{align}\label{wpg}
\|x-G_m(x)\|\leq C {\stackrel{\sim}\sigma}_{w(\Lambda_m(x))}^L(x)
\end{align}
for all $x\in X,~m\in \N$. \edfn

\bdfn We say that a basis $(e_n)$ is \textbf{$C$-$w$-reverse partially greedy}  if
\begin{align}\label{wrpg}
\|x-G_m(x)\|\leq C {\stackrel{\sim}\sigma}_{w(\Lambda_m(x))}^R(x)
\end{align}
for all $x\in X,~m\in \N$. \edfn

Let $C_p$, $C_{rp}$ be the least constants satisfying (\ref{wpg}) and (\ref{wrpg}) respectively.

\bdfn A basis $(e_n)$  of a Banach space $X$ is said to be
\textbf{$w$-democratic} if there exists a constant $C\geq 1$ such that
$\|1_A\|\leq C \|1_B\|$ whenever $A,B\in\N^{< \infty}$ and $w(A)\leq w(B)$. \edfn

\bdfn A basis $(e_n)$  of a Banach space $X$ is said to be
\textbf{$w$-conservative} (\textbf{$w$-reverse
conservative}) if there exists a constant $C\geq 1$ such that
$\|1_A\|\leq C \|1_B\|$ whenever $A,B\in\N^{< \infty}$ , $A<B$ ($B<A$) and $w(A)\leq w(B)$. \edfn

If $w=(1,1,1,\cdots)$ then $w(A)=|A|$ for any $A\in \N^{<\infty}$
and in this case $w$-partially greedy ($w$-reverse partially greedy,
$w$-democratic, $w$-conservative, $w$-reverse conservative) basis is
partially greedy (reverse partially greedy, democratic,
conservative, reverse conservative) basis.

We now recall a few results
related to quasi-greedy bases from \cite{S}.
\begin{lem}
Suppose that $(e_n)$ is a $K$-quasi-greedy basis and
$A\in \N^{< \infty}$. Then, for every choice of scalars $(a_i)_{i\in A}$  we have

\begin{align}\label{cuc}
\| \sum_{j\in A} a_je_j\|\leq 2K~ \underset{j\in A}\max
|a_j|\|\sum_{j\in A} e_j\ \|.
\end{align}
\end{lem}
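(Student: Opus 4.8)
The plan is to prove the inequality
\begin{align*}
\Big\| \sum_{j\in A} a_j e_j \Big\| \leq 2K\, \max_{j\in A} |a_j|\, \Big\| \sum_{j\in A} e_j \Big\|
\end{align*}
by reducing it to the quasi-greedy estimate $\|G_m(x)\| \leq K\|x\|$, equivalently $\|x - G_m(x)\| \leq (1+K)\|x\|$, applied to a cleverly chosen vector. First I would normalize: by homogeneity we may assume $\max_{j\in A}|a_j| = 1$, so that $|a_j| \leq 1$ for all $j \in A$. The goal then becomes $\|\sum_{j\in A} a_j e_j\| \leq 2K \|\sum_{j\in A} e_j\|$. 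The natural idea is to express the vector $y = \sum_{j\in A} a_j e_j$ as an average or a small combination of vectors to which the quasi-greedy bound applies cleanly, exploiting that all the target coefficients in $1_A = \sum_{j\in A} e_j$ are equal to $1$ and hence tie.

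The key step I would carry out is the following averaging/perturbation trick. Consider a scalar $t$ with $|t|$ slightly larger than $1$ (or use a limiting argument), and look at the vector $z = t\, 1_A = \sum_{j\in A} t\, e_j$ together with the vector $y = \sum_{j\in A} a_j e_j$. Form $z - y = \sum_{j\in A}(t - a_j) e_j$; because $|a_j| \leq 1 < |t|$, every coefficient $t - a_j$ of $z - y$ is nonzero and, more importantly, comparable in a controlled way. The standard route (as in the Dilworth--Kalton--Kutzarova--Temlyakov argument that this lemma reproduces) is to write $y$ as the difference of two greedy projections: given any $x$ supported on $A$ whose coefficients on $A$ all strictly dominate, in absolute value, the coefficients one wishes to subtract off, the operator $I - G_{|A|}$ applied to a suitable auxiliary vector isolates exactly $\sum_{j\in A} a_j e_j$. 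Concretely, I would choose a vector of the form $u = \sum_{j\in A}(1 + a_j) e_j + \text{(a tail of large equal coefficients)}$ arranged so that thresholding at the right level removes the tail and leaves $y$, then apply $\|u - G_m(u)\| \leq (1+K)\|u\|$ and estimate $\|u\|$ from above by $2\|1_A\|$ using the quasi-greedy bound again.

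The cleanest and most likely intended argument avoids auxiliary tails and instead uses two applications of the greedy operator to a single vector. Set $x = \sum_{j\in A}(1+a_j)e_j$; then all coefficients satisfy $0 \leq 1 + a_j \leq 2$, and one shows $G_{|A|}$ can be taken to select all of $A$ (since the support has size $|A|$), giving $\|x\| \leq K\|\,|x|_{\text{flat}}\|$-type control, while simultaneously $y = x - 1_A$. Combining $\|y\| \leq \|x - G(x)\| + \|G(x) - 1_A\|$ and bounding each piece by a constant multiple of $\|1_A\|$ yields the factor $2K$. The main obstacle, and the part requiring care, is handling the \emph{ties}: when coefficients coincide in absolute value the set $\Lambda_m$ is not unique, so I must verify that \emph{some} valid greedy selection realizes the split I want, and that the quasi-greedy inequality $\|G_m(x)\| \leq K\|x\|$ holds for that selection. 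This is the delicate point where the precise definition of $\Lambda_m(x)$ (selecting any $m$ indices whose coefficients dominate the rest) must be invoked, and where one typically perturbs the equal coefficients slightly and passes to a limit to sidestep the nonuniqueness, recovering the sharp constant $2K$ in the limit.
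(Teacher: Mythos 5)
First, a point of reference: the paper itself does not prove this lemma; it is quoted from \cite{S}, so your attempt has to be measured against the standard Dilworth--Kalton--Kutzarova--Temlyakov argument. Measured that way, your proposal has a genuine gap: it is a survey of candidate strategies, and the two constructions you actually pin down are vacuous or circular. For $x=\sum_{j\in A}(1+a_j)e_j$, the support of $x$ is contained in $A$, so $G_{|A|}(x)=x$; your triangle inequality $\|y\|\leq \|x-G(x)\|+\|G(x)-1_A\|$ then reads $\|y\|\leq 0+\|y\|$ and proves nothing. Worse, the promised estimate ``$\|u\|\leq 2\|1_A\|$'' for a vector whose coefficients lie in $[0,2]$ is precisely (a rescaling of) the inequality being proved, so invoking it is circular --- quasi-greediness gives no direct upper bound of a vector's norm by the norm of its flattened version; that direction is exactly what the lemma asserts. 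The variant with an auxiliary tail of large coefficients also fails, since the tail contributes its own norm, which is not controlled by $\|1_A\|$.

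The missing content is a two-step decomposition that your sketch never reaches. Step 1: for any $B\subset A$ one has $\|1_B\|\leq K\|1_A\|$; apply quasi-greediness to $x_\eta=(1+\eta)1_B+1_{A\setminus B}$, where the strict inequality $1+\eta>1$ forces the greedy algorithm to select $B$ first, so $\|(1+\eta)1_B\|=\|G_{|B|}(x_\eta)\|\leq K\|x_\eta\|$, and letting $\eta\to 0^+$ gives the claim (this is the only place your perturbation-of-ties idea is actually needed). Step 2: for signs $\varepsilon\in\mathcal{A}$, write $1_{\varepsilon A}=1_{A^+}-1_{A^-}$ with $A^{\pm}=\{j\in A:\varepsilon_j=\pm 1\}$; then $\|1_{\varepsilon A}\|\leq \|1_{A^+}\|+\|1_{A^-}\|\leq 2K\|1_A\|$, which is where the constant $2K$ comes from --- your proposal never produces it. Step 3: for general scalars with $\max_{j\in A}|a_j|\leq 1$, the vector $\sum_{j\in A}a_je_j$ lies in the convex hull of $\{1_{\varepsilon A}:\varepsilon\in\mathcal{A}\}$ (the cube $[-1,1]^A$ is the convex hull of its sign vertices), hence its norm is at most $\max_\varepsilon\|1_{\varepsilon A}\|\leq 2K\|1_A\|$. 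Your ``average or small combination'' remark gestures at Step 3, but without Steps 1 and 2 there is nothing to average against, so the argument as proposed cannot be completed.
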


\begin{lem}
Suppose that $(e_n)$ is a $K$-quasi-greedy basis and
$x\in X$ has greedy ordering $\rho$. Then
\begin{align}\label{min}
|e_{\rho(m)}^*(x)| \left\|\sum_1^m e_{\rho(i)}\right\|\leq 4K^2 \| x
\|.
\end{align}
\end{lem}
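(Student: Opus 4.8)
The plan is to reduce the estimate to a statement about the constant-coefficient sum $1_A$, where $A=\Lambda_m(x)=\{\rho(1),\dots,\rho(m)\}$, and then to control that sum through quasi-greediness together with inequality~(\ref{cuc}). Write $a=|e^*_{\rho(m)}(x)|$ and $\varepsilon_i=\mathrm{sgn}(e^*_{\rho(i)}(x))$ for $i\le m$, so that the left-hand side of~(\ref{min}) is exactly $a\|1_A\|$. The essential difficulty is that $1_A$ is the \emph{unsigned} sum, which by~(\ref{cuc}) is the largest among all sign patterns and therefore cannot be produced directly as a greedy approximant of a vector close to $x$ (flipping a coefficient's sign moves one far from $x$). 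My strategy is therefore to route the argument through the \emph{signed} indicator $a\,1_{\varepsilon A}=a\sum_{i\le m}\varepsilon_i e_{\rho(i)}$, which \emph{can} be realized as a greedy approximant, and then to pay an extra factor to pass from $1_{\varepsilon A}$ back to $1_A$.

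First I would introduce the clipped vector $v=a\,1_{\varepsilon A}+\sum_{i>m}e^*_{\rho(i)}(x)e_{\rho(i)}$, obtained from $x$ by lowering the modulus of each of the $m$ largest coefficients to $a$ while keeping its sign and leaving the tail untouched. Since every tail coefficient has modulus at most $a$, the set $A$ is a greedy set for $v$ and $G_m(v)=a\,1_{\varepsilon A}$; quasi-greediness then gives $a\|1_{\varepsilon A}\|=\|G_m(v)\|\le K\|v\|$. Next I would convert the signed indicator into the unsigned one. Splitting $A$ into $P=\{i\le m:\varepsilon_i=1\}$ and $N=\{i\le m:\varepsilon_i=-1\}$ and perturbing $1_{\varepsilon A}$ so that first $P$ and then $N$ becomes the strictly dominant block, quasi-greediness yields $\|1_P\|\le K\|1_{\varepsilon A}\|$ and $\|1_N\|\le K\|1_{\varepsilon A}\|$, whence $\|1_A\|\le\|1_P\|+\|1_N\|\le 2K\|1_{\varepsilon A}\|$. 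Combining the two steps gives $a\|1_A\|\le 2K\,a\|1_{\varepsilon A}\|\le 2K^2\|v\|$, so it remains only to show $\|v\|\le 2\|x\|$.

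The main obstacle is precisely this last inequality: bounding the clipped vector $v$, equivalently the head overflow $q:=P_A(x)-a\,1_{\varepsilon A}=\sum_{i\le m}\varepsilon_i(|e^*_{\rho(i)}(x)|-a)e_{\rho(i)}$ (so that $v=x-q$), by a fixed multiple of $\|x\|$. The naive triangle estimate $\|v\|\le a\|1_{\varepsilon A}\|+\|x-P_A(x)\|$ is useless, since it reintroduces the very quantity $a\|1_{\varepsilon A}\|$ being bounded and, because $K\ge 1$, that term cannot be absorbed. To break the loop I would exploit the \emph{monotone} structure of the overflow: the moduli $|e^*_{\rho(i)}(x)|-a$ are nonnegative and decreasing in $i$, so Abel summation expresses $q$ as a positive combination of the nested signed sums $1_{\varepsilon A_k}$ with $A_k=\{\rho(1),\dots,\rho(k)\}$, which are to be estimated using~(\ref{cuc}) together with the quasi-greedy bounds for the partial greedy approximants $G_k(x)$, rather than through the single term $a\|1_{\varepsilon A}\|$. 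In effect this is the boundedness of the truncation operator for quasi-greedy bases, and carrying out the bookkeeping so that the overflow is controlled directly by $\|x\|$ is the technical heart of the argument.

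Once $\|v\|\le 2\|x\|$ is established, the chain $a\|1_A\|\le 2K^2\|v\|\le 4K^2\|x\|$ closes the proof of~(\ref{min}). I expect the two ``reverse'' passages, namely extracting the unsigned $\|1_A\|$ from the signed sum and controlling the clipped auxiliary vector, to be where all the care is needed, while the quasi-greedy extraction $a\|1_{\varepsilon A}\|=\|G_m(v)\|\le K\|v\|$ and the reductions around it are routine.
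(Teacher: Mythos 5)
Your two outer reductions are correct: realizing $a1_{\varepsilon A}$ (where $a=|e_{\rho(m)}^*(x)|$, $A=\{\rho(1),\dots,\rho(m)\}$) as $G_m(v)$ of the clipped vector $v$, and the sign-removal estimate $\|1_A\|\le\|1_P\|+\|1_N\|\le 2K\|1_{\varepsilon A}\|$. (Note the paper states this lemma without proof, quoting \cite{S}, so the comparison is with the argument there.) The genuine gap is exactly the step you flag as the technical heart: $\|v\|\le 2\|x\|$. This is a truncation-operator bound with a constant \emph{independent of} $K$, and no such bound is established or available to you; the truncation estimate quoted in this paper (Lemma~\ref{lem:truncation}, from \cite{BDKOW}) gives only $\|T_a(x)\|\le (K+1)\|x\|$. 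Moreover, your sketched route to it collapses. Writing $b_i=|e_{\rho(i)}^*(x)|$ and $q=\sum_{i\le m}\varepsilon_i(b_i-a)e_{\rho(i)}$, Abel summation gives $q=\sum_{k=1}^{m-1}(b_k-b_{k+1})1_{\varepsilon A_k}$ with $A_k=\{\rho(1),\dots,\rho(k)\}$; but the tools you invoke for the pieces either go the wrong way (inequality~(\ref{cuc}) bounds coefficient sums \emph{by} $\|1_{A_k}\|$, not $\|1_{\varepsilon A_k}\|$ by $\|x\|$) or amount to $\|1_{\varepsilon A_k}\|\le 2K\|x\|/b_k$, which is precisely the lemma being proven. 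Even granting that bound, the resulting coefficient sum $\sum_{k<m}(b_k-b_{k+1})/b_k$ is unbounded: for $b_k=2^{-k}$ every term equals $1/2$, so it is $(m-1)/2$. The bookkeeping can be repaired by Abel-summing $q$ against the approximants themselves, $q=\sum_{k=1}^{m-1}\left(\frac{a}{b_{k+1}}-\frac{a}{b_k}\right)G_k(x)$, whose nonnegative coefficients sum to $1-a/b_1\le 1$, giving $\|q\|\le K\|x\|$ and hence $\|v\|\le (K+1)\|x\|$; but then your chain yields the constant $2K^2(K+1)$, strictly worse than the stated $4K^2$ whenever $K>1$. So the proposal neither closes its central estimate nor reaches the constant in the statement.

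The proof in \cite{S} avoids the clipped vector entirely: apply the monotone-coefficient Abel trick directly to $1_{\varepsilon A}$, paired with the uniformly bounded greedy approximants. Since $G_j(x)-G_{j-1}(x)=\varepsilon_j b_j e_{\rho(j)}$ (with $G_0=0$),
\begin{align*}
1_{\varepsilon A}=\sum_{j=1}^{m}\frac{G_j(x)-G_{j-1}(x)}{b_j}
=\sum_{j=1}^{m-1}\left(\frac{1}{b_j}-\frac{1}{b_{j+1}}\right)G_j(x)+\frac{G_m(x)}{b_m},
\end{align*}
and since $\|G_j(x)\|\le K\|x\|$ for every $j\le m$ while the absolute values of the coefficients telescope to $\left(\frac{1}{b_m}-\frac{1}{b_1}\right)+\frac{1}{b_m}\le \frac{2}{a}$, one gets $a\|1_{\varepsilon A}\|\le 2K\|x\|$ in one stroke. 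Combined with your (correct) sign-removal step this gives $a\|1_A\|\le 2K\,a\|1_{\varepsilon A}\|\le 4K^2\|x\|$, which is exactly (\ref{min}). The moral is that the factor $K$ should enter only once, through $\|G_j(x)\|\le K\|x\|$, rather than through a truncation bound whose constant itself grows with $K$.
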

\section{weight-partially and weight-reverse partially greedy bases}
\begin{thm}\label{pg}
Suppose $(e_n)$ is a basis of a Banach space $X$. Then \TFAE \bla
\item  $(e_n)$ is $w$-partially greedy.
\item  $(e_n)$ is quasi-greedy and $w$-conservative.
\el
\end{thm}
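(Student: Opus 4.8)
The plan is to prove the two implications separately, following the classical template from \cite{S} for the unweighted case but tracking the weight function $w$ throughout.

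\textbf{From $(b)$ to $(a)$.} Suppose $(e_n)$ is $K$-quasi-greedy and $w$-conservative with constant $D$. Fix $x\in X$ and $m\in\N$, and let $A\in\N^{<\infty}$ satisfy $w(A)\le w(\Lambda_m(x))$ and $A<\al_m(x)$; my goal is to bound $\|x-G_m(x)\|$ by a constant times $\|x-P_A(x)\|$. First I would split $x-G_m(x)=(x-P_A(x))-P_{\Lambda_m(x)\setminus A}(x)+P_{A\setminus\Lambda_m(x)}(x)$ and, more efficiently, estimate $\|x-G_m(x)\|\le\|x-P_{A\cup\Lambda_m(x)}(x)\|+\|P_{(A\cup\Lambda_m(x))\setminus\Lambda_m(x)}(x)\|$. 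The quasi-greedy projection estimate controls $\|x-P_{A\cup\Lambda_m(x)}(x)\|$ by a multiple of $\|x-P_A(x)\|$ (since removing the greedy coordinates is a quasi-greedy operation). The essential new term is $\|P_{\Lambda_m(x)\setminus A}(x)\|$, the mass on greedy indices not in $A$: using Lemma~\ref{cuc} this is bounded by $2K\max_{j\in\Lambda_m(x)}|e_j^*(x)|\,\|1_{\Lambda_m(x)\setminus A}\|$, and the key point is that $\Lambda_m(x)\setminus A > A\setminus\Lambda_m(x)$ (because $A<\al_m(x)\le$ every greedy index) together with $w(\Lambda_m(x)\setminus A)\le w(A\setminus\Lambda_m(x))$ lets me invoke $w$-conservativeness to replace $\|1_{\Lambda_m(x)\setminus A}\|$ by $D\|1_{A\setminus\Lambda_m(x)}\|$, which is then absorbed into $\|x-P_A(x)\|$ via another application of Lemma~\ref{cuc} and the fact that the coordinates of $x-P_A(x)$ on $A\setminus\Lambda_m(x)$ dominate the threshold. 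Taking the infimum over admissible $A$ gives the $w$-partially greedy inequality.

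\textbf{From $(a)$ to $(b)$.} Assuming $(e_n)$ is $C_p$-$w$-partially greedy, I would first extract quasi-greediness: for any $x$ and $m$ the set $A=\es$ is always admissible in the definition of $\stackrel{\sim}\sigma^L_{w(\Lambda_m(x))}(x)$ (it satisfies $w(\es)=0\le w(\Lambda_m(x))$ and $\es<\al_m(x)$ vacuously), so \eqref{wpg} gives $\|x-G_m(x)\|\le C_p\|x\|$, whence $\|G_m(x)\|\le(1+C_p)\|x\|$ and the basis is quasi-greedy. To extract $w$-conservativeness, I would take $A<B$ with $w(A)\le w(B)$ and build a suitable test vector, the standard choice being $x=\sum_{i\in A}e_i+(1+\delta)\sum_{i\in B}e_i$ (or its sign-adjusted analogue), arranged so that the top $|B|$ greedy coordinates are exactly $B$ and $\al_{|B|}(x)=\min B>\max A$; applying \eqref{wpg} with $m=|B|$ and the admissible set $A$ (legal because $w(A)\le w(B)=w(\Lambda_m(x))$ and $A<\al_m(x)$) then yields $\|1_A\|$ on the left (after removing the greedy part and letting $\delta\to0$) controlled by a multiple of $\|1_B\|$ on the right.

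\textbf{Main obstacle.} The delicate point throughout is the bookkeeping of the weight condition $w(A)\le w(\Lambda_m(x))$ as sets are split and recombined: unlike the unweighted case, where $|A\setminus\Lambda_m|=|\Lambda_m\setminus A|$ automatically when $|A|=|\Lambda_m|$, here I must verify that $w(\Lambda_m(x)\setminus A)\le w(A\setminus\Lambda_m(x))$ follows from $w(A)\le w(\Lambda_m(x))$ by subtracting the common weight $w(A\cap\Lambda_m(x))$ from both sides. This is the step where the hypothesis $w(A)\le w(\Lambda_m(x))$ in the definition of $\stackrel{\sim}\sigma^L$ (rather than a cardinality constraint) is exactly what makes $w$-conservativeness, rather than conservativeness, the correct companion property. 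I expect this weight-subtraction lemma, though elementary, to be the linchpin that the whole argument turns on.
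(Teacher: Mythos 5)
Your direction $(a)\Rightarrow(b)$ is correct and is essentially identical to the paper's argument: taking $A=\es$ gives quasi-greediness, and the test vector $x=1_A+(1+\delta)1_B$ with $m=|B|$ gives $w$-conservativeness after letting $\delta\to 0$. The genuine problem is in $(b)\Rightarrow(a)$, where your key step is stated backwards in three compounding ways. First, since $A<\al_m(x)$ forces $A\cap\Lambda_m(x)=\es$, we have $A\setminus\Lambda_m(x)=A$ and $\Lambda_m(x)\setminus A=\Lambda_m(x)$; in your own decomposition $\|x-G_m(x)\|\le\|x-P_{A\cup\Lambda_m(x)}(x)\|+\|P_A(x)\|$ the essential new term is therefore $\|P_A(x)\|$, not $\|P_{\Lambda_m(x)\setminus A}(x)\|=\|P_{\Lambda_m(x)}(x)\|$; the greedy mass needs no conservativeness at all, being controlled by quasi-greediness via $P_{\Lambda_m(x)}(x)=G_m(x-P_A(x))$. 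Second, the inequality you invoke, $\|1_{\Lambda_m(x)\setminus A}\|\le D\|1_{A\setminus\Lambda_m(x)}\|$, bounds the set lying to the \emph{right} by the set lying to the \emph{left}; $w$-conservativeness only gives $\|1_E\|\le D\|1_F\|$ when $E<F$ and $w(E)\le w(F)$, so what you are actually using is $w$-\emph{reverse} conservativeness, a genuinely different property (it is the one that characterizes $w$-reverse partially greedy bases, Theorem~\ref{rpg}) which is not among your hypotheses. Third, the weight inequality you call the linchpin, $w(\Lambda_m(x)\setminus A)\le w(A\setminus\Lambda_m(x))$, is false in general: subtracting the common weight from $w(A)\le w(\Lambda_m(x))$ yields $w(A\setminus\Lambda_m(x))\le w(\Lambda_m(x)\setminus A)$, the \emph{opposite} inequality (and here the two sets are disjoint anyway, so there is nothing to subtract). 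Relatedly, your final absorption step appeals to ``the coordinates of $x-P_A(x)$ on $A\setminus\Lambda_m(x)$,'' but those coordinates are all zero.

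The repair is to run the estimate in the orientation your hypotheses actually provide, which is what the paper does. With $(e_n)$ $K$-quasi-greedy and $w$-conservative with constant $D$, inequality $(\ref{cuc})$ gives
\begin{align*}
\|P_A(x)\|\le 2K\,\max_{i\in A}|e_i^*(x)|\,\|1_A\|\le 2KD\,\min_{i\in\Lambda_m(x)}|e_i^*(x)|\,\|1_{\Lambda_m(x)}\|,
\end{align*}
where the threshold comparison $\max_{i\in A}|e_i^*(x)|\le\min_{i\in\Lambda_m(x)}|e_i^*(x)|$ and the $w$-conservative bound $\|1_A\|\le D\|1_{\Lambda_m(x)}\|$ both go left-to-right, using exactly the admissibility conditions $A<\al_m(x)$ and $w(A)\le w(\Lambda_m(x))$ with no weight bookkeeping whatsoever; then $(\ref{min})$ applied to $P_{\Lambda_m(x)}(x)$, followed by quasi-greediness, gives $\min_{i\in\Lambda_m(x)}|e_i^*(x)|\,\|1_{\Lambda_m(x)}\|\le 4K^2\|P_{\Lambda_m(x)}(x)\|\le 4K^3\|x-P_A(x)\|$. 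Combining, $\|x-G_m(x)\|\le(1+K+8K^4D)\|x-P_A(x)\|$, and taking the infimum over admissible $A$ finishes the proof. So your instinct that the weight hypothesis $w(A)\le w(\Lambda_m(x))$ is what makes $w$-conservativeness the right companion property is sound, but it enters the argument directly as the hypothesis of the $w$-conservative inequality for the pair $(A,\Lambda_m(x))$, not through any subtraction lemma.
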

\begin{proof}
We first prove that $(a)$ implies
$(e_n)$ is $w$-conservative. Let $(e_n)$ be $C_p$-$w$-partially greedy, $A,B\in \N^{<\infty}$ with $A<B$ and $w(A)\leq w(B)$.
Consider $x=1_A+(1+\varepsilon)1_B$ for any $\varepsilon>0$. Then
$G_{|B|}(x)=(1+\varepsilon)1_B$. Now from
(\ref{wpg}), we have $\| 1_A \|\leq C_p
{\stackrel{\sim}\sigma}_{w(\Lambda_{|B|}(x))}^L(x)\leq
C_p(1+\varepsilon)\| 1_B \|$. Thus, by letting
$\varepsilon\rightarrow 0$, we get $(e_n)$ is $w$-conservative.
Clearly $(a)$ implies that $\|x-G_m(x)\|\leq C_p \|x\|$ and hence
the basis is quasi-greedy.

Now we will prove that $(b)$ implies $(a)$. Let the  basis $(e_n)$
be $C_q$-quasi-greedy and $w$-conservative with
constant $C$. Let $x\in X$, $m\in \N$ and $G_m(x)=\underset{i\in
\Lambda_m(x)}{\sum}e_i^*(x)e_i$.

If $A\in \N^{<\infty}$ with $A<\al_m(x)$ and $w(A)\leq w(\Lambda_m(x))$, then
\begin{align*}
x-G_m(x)=x-\underset{i\in A}{\sum}e_i^*(x)e_i-\underset{i\in
\Lambda_m(x)}\sum e_i^*(x)e_i+\underset{i\in A}{\sum}e_i^*(x)e_i.
\end{align*}
We can write
\begin{align*}
\underset{i\in
\Lambda_m(x)}{\sum}e_i^*(x)e_i=G_m\left(x-\underset{i\in
A}{\sum}e_i^*(x)e_i\right),
\end{align*} and hence
$$\|
\underset{i\in \Lambda_m(x)}{\sum}e_i^*(x)e_i\|\le
C_q\|x-\underset{i\in A}{\sum}e_i^*(x)e_i\|.$$

From $(\ref{cuc})$, $(\ref{min})$ and the $w$-conservative property we have
\begin{align*}
\|\underset{i\in A} \sum e_i^*(x)e_i  \| &\leq 2C_q~\underset{i\in
A}\max |e_i^*(x)|\| 1_A\|\\ &\leq 2C_q C\underset{i\in
\Lambda_m(x)}{\min}|e_i^*(x)|\|1_{\Lambda_m(x)}\|\\&\leq
8C_q^3C\|\underset{i\in \Lambda_m(x)}{\sum}e_i^*(x)e_i\|\\
& \le 8C_q^4C \| x-\underset{i\in A}{\sum}e_i^*(x)e_i \|
\end{align*}
Thus $\parallel x-G_m(x)\parallel\leq (1+C_q+8C_q^4C)\|
x-\underset{i\in A}{\sum}e_i^*(x)e_i \|$.
\end{proof}

\Rem
In \cite{BDKOW} a basis $(e_n)$ of a Banach space $X$ was defined to be $w$-partially greedy if for all $m,~r$ such that
$w(\{1,\cdots,m\})\leq w(\Lambda_r(x))$, there exists a constant $C$ such that
\begin{align*}
\|x-G_r(x)\|\leq C\|\sum_{m+1}^\infty e_i^*(x)e_i\|
\end{align*}
It was proved in \cite{BDKOW} that a basis is $w$-partially greedy
if and only if it is quasi-greedy and $w$-conservative. Thus from
Theorem~\ref{pg} it follows that the $w$-partially greedy basis
considered in this paper is equivalent to the one considered in
\cite{BDKOW}.
\end{rem}

Similar arguments as in Theorem~\ref{pg} yields the following
result.

\begin{thm}\label{rpg}
Suppose $(e_n)$ is a basis of a Banach space $X$. Then \TFAE \bla
\item  $(e_n)$ is $w$-reverse partially greedy.
\item  $(e_n)$ is quasi-greedy and $w$-reverse conservative.
\el
\end{thm}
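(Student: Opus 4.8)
The plan is to follow the proof of Theorem~\ref{pg} almost verbatim, systematically interchanging the positional constraint $A<\al_m(x)$ with $\be_m(x)<A$ and replacing the $w$-conservative hypothesis by $w$-reverse conservativity; the greedy-algorithm estimates are insensitive to whether the auxiliary set sits to the left or to the right of $\Lambda_m(x)$, so I expect the same constants to appear.

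For $(a)\Ra(b)$ I would first extract $w$-reverse conservativity. Given $A,B\in\N^{<\infty}$ with $B<A$ and $w(A)\le w(B)$, the correct test vector is $x=(1+\e)1_B+1_A$ with $\e>0$. Because the coefficients on $B$ strictly dominate those on $A$ and $B<A$, one gets $\Lambda_{|B|}(x)=B$ and $G_{|B|}(x)=(1+\e)1_B$, so $\be_{|B|}(x)=\max B<\min A$; hence $A$ is admissible in the infimum defining ${\stackrel{\sim}\sigma}_{w(\Lambda_{|B|}(x))}^R(x)$, and $(\ref{wrpg})$ yields
\[
\|1_A\|=\|x-G_{|B|}(x)\|\le C_{rp}\,{\stackrel{\sim}\sigma}_{w(\Lambda_{|B|}(x))}^R(x)\le C_{rp}\|x-P_A(x)\|=C_{rp}(1+\e)\|1_B\|.
\]
Letting $\e\to0$ gives $w$-reverse conservativity with constant $C_{rp}$. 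Quasi-greediness follows, as in Theorem~\ref{pg}, by taking $A=\es$ in the infimum, which is admissible (the support condition holds vacuously and $w(\es)=0$) and gives ${\stackrel{\sim}\sigma}_{w(\Lambda_m(x))}^R(x)\le\|x\|$, whence $\|x-G_m(x)\|\le C_{rp}\|x\|$.

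For $(b)\Ra(a)$, assume $(e_n)$ is $C_q$-quasi-greedy and $w$-reverse conservative with constant $C$. Fixing $x\in X$, $m\in\N$ and an arbitrary $A\in\N^{<\infty}$ with $\be_m(x)<A$ and $w(A)\le w(\Lambda_m(x))$, I would bound $\|x-G_m(x)\|$ via the same decomposition
\[
x-G_m(x)=\Big(x-\sum_{i\in A}e_i^*(x)e_i\Big)-\sum_{i\in\Lambda_m(x)}e_i^*(x)e_i+\sum_{i\in A}e_i^*(x)e_i
\]
used in Theorem~\ref{pg}. The constraint $\be_m(x)<A$ forces $A\cap\Lambda_m(x)=\es$, so $\sum_{i\in\Lambda_m(x)}e_i^*(x)e_i=G_m\big(x-\sum_{i\in A}e_i^*(x)e_i\big)$ and its norm is at most $C_q\|x-\sum_{i\in A}e_i^*(x)e_i\|$; disjointness from $\Lambda_m(x)$ also gives $\max_{i\in A}|e_i^*(x)|\le\min_{i\in\Lambda_m(x)}|e_i^*(x)|$. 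Applying $(\ref{cuc})$, then $w$-reverse conservativity in the form $\|1_A\|\le C\|1_{\Lambda_m(x)}\|$ (legitimate since $\Lambda_m(x)<A$ and $w(A)\le w(\Lambda_m(x))$), then $(\ref{min})$ applied to $G_m(x)$, and finally quasi-greediness, bounds $\|\sum_{i\in A}e_i^*(x)e_i\|$ by $8C_q^4C\|x-\sum_{i\in A}e_i^*(x)e_i\|$. The triangle inequality then gives $\|x-G_m(x)\|\le(1+C_q+8C_q^4C)\|x-\sum_{i\in A}e_i^*(x)e_i\|$, and taking the infimum over admissible $A$ establishes $(\ref{wrpg})$.

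The one point deserving care, and the only place where ``reverse'' actually matters, is the application of the conservativity hypothesis: in Theorem~\ref{pg} the constraint $A<\al_m(x)$ yields $A<\Lambda_m(x)$ and invokes $w$-conservativity, whereas here $\be_m(x)<A$ yields $\Lambda_m(x)<A$ and must invoke $w$-reverse conservativity with $B=\Lambda_m(x)$. Since every remaining estimate uses only the disjointness $A\cap\Lambda_m(x)=\es$ and the coefficient comparison above, both of which hold equally well in the reverse setting, I do not anticipate any genuine obstacle beyond matching this hypothesis to the correct separation.
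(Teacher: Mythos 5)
Your proposal is correct and is essentially the paper's own argument: the paper proves Theorem~\ref{rpg} by simply stating that ``similar arguments as in Theorem~\ref{pg}'' apply, and your write-up carries out exactly that adaptation, correctly identifying the only two places where the reverse hypothesis enters (the test vector $x=(1+\varepsilon)1_B+1_A$ with $B<A$, and the application of $w$-reverse conservativity with $\Lambda_m(x)<A$), and arriving at the same constants $C_{rp}$ and $(1+C_q+8C_q^4C)$ as in Theorem~\ref{pg}.
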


\section{weight-left property (A) and weight-right property (A)}

Following notion of $w$-Property $(A)$ was introduced in \cite{BDKOW}.

\bdfn A basis $(e_n)$ of a Banach space $X$ is said to have
$w$-Property $(A)$ if there exists a constant $C\geq 1$ such that
\begin{align}\label{a}
\|x+t1_{\varepsilon A}\|\leq C \|x+t1_{\eta B}\|
\end{align}

for any $x\in X$, $t\geq \sup_j |e_j^*(x)|$,  $A,B\in \N^{<\infty}$,
$A\cap B=\emptyset$, $w(A)\leq w(B)$, $supp(x)\cap (A\cup
B)=\emptyset$ and $\varepsilon,\eta\in \mathcal{A}$. \edfn

Motivated by this notion of $w$-Property $(A)$, we now give the
definitions of $w$-left Property $(A)$ and $w$-right Property $(A)$.

\bdfn We say a basis $(e_n)$ of a Banach space $X$  satisfies
\textbf{$w$-left Property $(A)$} if there exists a constant $C\geq 1$ such that
\begin{align}\label{wlpa}
\|x+t1_{\varepsilon A}\|\leq C \|x+t1_{\eta B}\|
\end{align}

for any $x\in X$, $t\geq \sup_j |e_j^*(x)|$,  $A,B\in \N^{<\infty}$
with $A<B$, $w(A)\leq w(B)$, $supp(x)\cap (A\cup B)=\emptyset$ and
$\varepsilon,\eta\in \mathcal{A}$. \edfn

\bdfn A basis  $(e_n)$ of a Banach space $X$ is said to have \textbf{$w$-right
Property $(A)$} if there exists a constant $C\geq 1$ such that
\begin{align}\label{wrpa}
\|x+t1_{\varepsilon A}\|\leq C \|x+t1_{\eta B}\|
\end{align}

for any $x\in X$, $t\geq \sup_j |e_j^*(x)|$,  $A,B\in \N^{<\infty}$
with $B<A$, $w(A)\leq w(B)$, $supp(x)\cap (A\cup B)=\emptyset$ and
$\varepsilon,\eta\in \mathcal{A}$. \edfn

Let $C_a$, $C_{la}$, $C_{ra}$ be the least constants satisfying (\ref{a}), (\ref{wlpa})
and (\ref{wrpa}) respectively. We will write $(e_n)$ satisfies $C_{a}$-$w$-Property $(A)$, $C_{la}$-$w$-left Property $(A)$
and $C_{ra}$-$w$-right Property $(A)$.

\Rem \label{Aimpliesconservative}
Observe that any basis satisfying
$w$-Property $(A)$, $w$-left Property $(A)$, $w$-right Property $(A)$ is respectively
$w$-democratic, $w$-conservative and $w$-reverse conservative.
\end{rem}

For $w=(1,1,1,\cdots)$ we will write $w$-Property $(A)$ ($w$-left
Property $(A)$, $w$-right Property $(A)$) as Property $(A)$ (left Property
$(A)$, right Property $(A)$). While considering $w$-right Property $(A)$ and $w$-reverse conservative bases
for any $B\in \N^{<\infty}$, we will always include the empty set in the collection $\{A\in \N^{<\infty}: B<A,~w(A)\leq w(B)\}$.

\Rem
Albiac and Wojtaszczyk introduced the following notion of Property $(A)$ (we will refer to
it as classical Property $(A)$) in \cite{AW} to study a characterization of $1$-greedy basis. A basis $(e_n)$ has
classical Property $(A)$ if there exists a constant $C\geq 1$ such that
\begin{align*}
\|x+t1_{\varepsilon A}\|\leq C \|x+t1_{\eta B}\|
\end{align*}
for any $x\in X$, $t\geq \sup_j |e_j^*(x)|$,  $A,B\in \N^{<\infty}$,
$A\cap B=\emptyset$, $|A|=|B|$, $supp(x)\cap (A\cup B)=\emptyset$
and $\varepsilon,\eta\in \mathcal{A}$. In this paper we are dealing
only with the Schauder basis so the Property $(A)$ is equivalent to
the classical Property $(A)$.
\end{rem}

\begin{ex}
We present examples of basis which satisfy left Property $(A)$ or
right Property $(A)$ but does not satisfy Property $(A)$. \bla
\item Let $p=(p_n)$ be a strictly increasing sequence of natural numbers and $M_n(t)=t^{p_n}$ for $t\geq 0$.
Let $X_p$ be the corresponding modular sequence space,
that is, $X_p$ is the Banach space consisting of all sequences
$x=(x_n)$ with $\sum {(\frac {|x_n|}{\lambda}})^{{p_n}}<
\infty$  for some $\lambda >0$ and the norm on $X_p$ is defined as
\begin{align*}
    \|x\|_p=\inf\left\{\lambda >0: \sum {\Big(\frac {|x_n|}{\lambda}}\Big)^{p_n}\leq 1 \right\}.
\end{align*}

Let $x\in X$, $t\geq \sup_j |e_j^*(x)|$,  $A,B\in \N^{<\infty}$ with
$B<A$, $|A|\leq |B|$, $supp(x)\cap (A\cup B)=\emptyset$ and
$\varepsilon,\eta\in \mathcal{A}$.
Then clearly $\|x+t1_{\varepsilon A}\|_p\leq
\|x+t1_{\eta B}\|_p$, thus the canonical basis $(e_n)$ of $X_p$
satisfies right Property $(A)$ and its easy to observe that $(e_n)$
does not satisfy left Property $(A)$.

\item For each $n\in \mathbb{N}$, we define $F_n:=\{A\subset \N:|A|\leq n!,
~n!\leq A\}$ and $F:=\bigcup_{n\geq 1}F_n$.

Observe that the set $F$ is closed under spreading to the right: in
fact, if
 $A,B\in\mathbb{ N}^m$, $A\in F$ and $\min A\leq \min B$, then $B\in F$.

We define a norm on $c_{00}$ (the space of all sequences of real numbers with finitely many non-zero terms) as follows
\begin{align*}
\|x\|=\max \{(|x|,1_A):A\in F\}
\end{align*}
for $x\in c_{00}$. Let $X$ be the completion of $c_{00}$ in this
norm. The canonical basis $(e_n)$ of $X$  is  normalized and
$1$-unconditional.

From the right spreading property of $F$, it follows that $\|x\|\leq
\|y\|$, where $y=\sum a_i e_{n_i}$ is a spread of $x=\sum a_ie_i$
with $n_1<n_2<\ldots$. In particular, if $A<B$ and $|A|\leq |B|$
then for any $x\in X$, $t\geq \sup_j |e_j^*(x)|$, $supp(x)\cap
(A\cup B)=\emptyset$ and $\varepsilon,\eta\in \mathcal{A}$ we get
$\|x+t1_{\varepsilon A}\|\leq \|x+t1_{\eta B}\|$. Thus $(e_n)$
satisfies left Property $(A)$.

Observe that, $\|1_{[1,n!]}\|=(n-1)!$, while
$\|1_{[n!+1,2n!]}\|=n!$. So $(e_n)$ is not a democratic basis and hence
$(e_n)$ cannot satisfy Property $(A)$.

Now consider the dual norm $\|\cdot\|^*$. Since $\|x\|\leq \|y\|$ it
follows easily that $\|x\|^*\geq\|y\|^*$. In particular,
$\|\cdot\|^*$ satisfies right Property $(A)$. But in this case
$\|1_{[1,n!]}\|=(n-1)!$ implies $\|1_{[1,n!]}\|^*\geq n$ while
$\|1_{[n!+1,2n!]}\|^*= 1$
since $[n!+1,2n!]\in F_n$. So the dual norm is not democratic and hence it cannot satisfy Property $(A)$.\\
\el
\end{ex}
\begin{ex}
Now we shall present an example of a basis which is conservative but does not satisfies left Property $(A)$.
Let $(a_i)$ be a sequence of natural numbers such that
\begin{align*}
a_{2i}=i~\mbox{and}~a_{2i-1}=i~ \mbox{for ~i}\in\N.
\end{align*}
We define a norm on $c_{00}$ as follows
\begin{align*}
\|x\|=\sup_N \left|\sum_1^N a_ix_i\right|
\end{align*}
for $x\in c_{00}$. Let $X$ be the completion of $c_{00}$ in this
norm. The canonical basis $(e_n)$ of $X$  is normalized basis.
Clearly $(e_n)$ is conservative but is not a democratic basis.

Let $n\in \N$ be any even number, $A=\{n+1,\cdots,2n\}$ and $B=\{2n+2,2n+4,\cdots,4n-2,4n\}$.
Choose $x\in X$ with $supp(x)=\{2n+1,2n+3,\cdots,4n-3,4n-1\}$ and $x_i=-1$ for all $i \in supp(x)$.
Now for any $\varepsilon,\eta\in \mathcal{A}$ with $\varepsilon_i, \eta_i=1$ for all $i$ we get
\begin{align*}
\|x+1_{\varepsilon A}\|={\frac{n}{2}}\left(\frac{3n}{2}+1\right)
\end{align*}
and
\begin{align*}
\|x+1_{\eta B}\|=2n.
\end{align*}
Thus $(e_n)$ does not satisfies left Property $(A)$.

\end{ex}

We now give characterization of $w$-partially greedy bases in terms
of quasi-greedy property and $w$-left Property $(A)$.
\begin{Prop}\label{prop:partially implies qlpa}
Let $(e_n)$ be a basis of a Banach space $X$. If $(e_n)$ is $C_p$-$w$-partially greedy,
then $(e_n)$ is $(C_p+1)$-quasi-greedy and has
$C_p$-$w$-left Property $(A)$.
\end{Prop}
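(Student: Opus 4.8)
The plan is to prove the two conclusions separately, deriving each from the defining inequality~(\ref{wpg}) of $C_p$-$w$-partially greedy bases. For quasi-greediness, I would exploit the fact that the empty set is always an admissible competitor in the infimum defining ${\stackrel{\sim}\sigma}_{w(\Lambda_m(x))}^L(x)$. First I would note that for $A=\emptyset$ we have $w(A)=0\leq w(\Lambda_m(x))$ and trivially $A<\al_m(x)$, so $P_A(x)=0$ and hence ${\stackrel{\sim}\sigma}_{w(\Lambda_m(x))}^L(x)\leq\|x\|$. Plugging this into~(\ref{wpg}) gives $\|x-G_m(x)\|\leq C_p\|x\|$, from which $\|G_m(x)\|\leq\|x\|+\|x-G_m(x)\|\leq(C_p+1)\|x\|$, establishing that $(e_n)$ is $(C_p+1)$-quasi-greedy.

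For the $w$-left Property $(A)$, I would follow the classical testing strategy: given $x$, $t\geq\sup_j|e_j^*(x)|$, and sets $A<B$ with $w(A)\leq w(B)$, $\mathrm{supp}(x)\cap(A\cup B)=\emptyset$, and signs $\varepsilon,\eta\in\mathcal{A}$, I would construct a single element whose greedy approximant strips off the $B$-part while leaving behind exactly $x+t1_{\varepsilon A}$. The natural candidate is to perturb the coefficients on $B$ to be strictly larger in modulus than $t$. Concretely I would set $z=x+t1_{\varepsilon A}+t(1+\delta)1_{\eta B}$ for small $\delta>0$; then, since $|e_i^*(z)|=t(1+\delta)$ on $B$ and all other coefficients of $z$ have modulus at most $t$, the greedy set of order $m=|B|$ is $\Lambda_m(z)=B$, so $G_m(z)=t(1+\delta)1_{\eta B}$ and $z-G_m(z)=x+t1_{\varepsilon A}$. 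The main technical point to verify is that $A$ is an admissible competitor in ${\stackrel{\sim}\sigma}_{w(\Lambda_m(z))}^L(z)$: indeed $w(\Lambda_m(z))=w(B)\geq w(A)$, and the condition $A<\al_m(z)=\min B$ holds precisely because $A<B$.

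With $A$ admissible I would obtain $z-P_A(z)=x+t(1+\delta)1_{\eta B}$, so~(\ref{wpg}) yields $\|x+t1_{\varepsilon A}\|=\|z-G_m(z)\|\leq C_p\|z-P_A(z)\|=C_p\|x+t(1+\delta)1_{\eta B}\|$; letting $\delta\to 0$ gives $\|x+t1_{\varepsilon A}\|\leq C_p\|x+t1_{\eta B}\|$, which is exactly $C_p$-$w$-left Property $(A)$. The step I expect to require the most care is the greedy-selection argument: one must ensure that the coefficients on $B$ are \emph{strictly} dominant so that $\Lambda_m(z)$ is forced to equal $B$ and the greedy approximant is unambiguous, which is why the $(1+\delta)$ factor and the final limit are needed rather than working directly with $t$. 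This is precisely the device already used in the proof of Theorem~\ref{pg} when testing $w$-conservativity, so the construction should go through with only the bookkeeping on the support and ordering conditions to check.
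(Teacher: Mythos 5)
Your proposal is correct and follows essentially the same route as the paper: quasi-greediness via the empty-set competitor $A=\emptyset$, and $w$-left Property $(A)$ by testing the defining inequality on a perturbed element (the paper uses $x+t1_{\varepsilon A}+(t+\delta)1_{\eta B}$, you use the multiplicative perturbation $t(1+\delta)$ on $B$, which is the same device) and letting $\delta\to 0$. Your write-up is in fact slightly more careful than the paper's, since you verify explicitly that $\Lambda_m(z)=B$ and that $A$ is an admissible competitor.
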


\begin{proof}
Let $(e_n)$ be a $w$-partially greedy basis with constant $C_p$.
Thus
\begin{align*}
\|x-G_m(x)\|\leq C_p \inf\{ \| x-P_A (x) \|:A\in\N^{<\infty},
~w(A)\leq w(\Lambda_m(x)),~A<\al_m(x)\}
\end{align*}
for all $x\in X$ and $m\in \N$.

If we take $A=\emptyset$, then we get $\|x-G_m(x)\|\leq C_p \|x\|$
and hence $\|G_m(x)\|\leq (C_p+1)\|x\|$ for all $x\in X$, $m\in \N$.

Let $x\in X$, $t\geq \sup_j |e_j^*(x)|$, $A,B\in \N^{<\infty}$ with
$A<B$, $w(A)\leq w(B)$, $supp(x)\cap (A\cup B)=\emptyset$ and
$\varepsilon,\eta\in \mathcal{A}$. If we take $y=x+t1_{\varepsilon
A}+(t+\delta)1_{\eta B}$ for any $\delta>0$, then
$\|x+t1_{\varepsilon A}\|=\|y-G_{|B|}(y)\|\leq
C_p\|y-t1_{\varepsilon A}\|=\|x+(t+\delta)1_{\eta B}\|$. This gives
that $(e_n)$ satisfies $C_p$-$w$-left Property $(A)$.
\end{proof}

From Remark~\ref{Aimpliesconservative} it is clear that  $w$-left
Property $(A)$ implies $w$-conservative property. Thus from
Theorem~\ref{pg} it follows that if a basis $(e_n)$ is quasi-greedy
and has $w$-left Property $(A)$ then $(e_n)$ is $w$-partially greedy. We
now prove this result using the arguments similar to
\cite{BDKOW} to get better estimates in terms of the constant.

\begin{Prop}\label{proertyAalternative}
A basis $(e_n)$ of a Banach space $X$ satisfies $w$-left left
Property $(A)$ with constant $C_{la}$ if and only if
\begin{align}\label{wlpa2}
\|x\|\leq C_{la} \|x-P_A(x)+1_{\eta B}\|
\end{align}
where $x\in X$, $\sup_j |e_j^*(x)|\leq 1$, $A,B\in \N^{<\infty}$,
$A<B$, $w(A)\leq w(B)$, $supp(x)\cap  B=\emptyset$ and $\eta\in \mathcal{A}$.
\end{Prop}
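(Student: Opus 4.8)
The plan is to prove the two implications separately: the implication from (\ref{wlpa2}) to $w$-left Property $(A)$ will follow by a one-line normalization, while the converse will use a convexity (averaging over signs) argument. First I would dispose of the easy direction. Assume (\ref{wlpa2}) holds and take $x\in X$, $t\ge \sup_j|e_j^*(x)|$, $A,B\in\N^{<\infty}$ with $A<B$, $w(A)\le w(B)$, $supp(x)\cap(A\cup B)=\emptyset$ and $\varepsilon,\eta\in\mathcal{A}$; we may assume $t>0$, since if $t=0$ then $x=0$ and both sides of (\ref{wlpa}) vanish. I would apply (\ref{wlpa2}) to the normalized vector $z=t^{-1}x+1_{\varepsilon A}$. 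Because $supp(x)\cap A=\emptyset$, the coefficients of $z$ on $A$ are exactly the $\varepsilon_i$, so $\sup_j|e_j^*(z)|\le 1$, $P_A(z)=1_{\varepsilon A}$, hence $z-P_A(z)=t^{-1}x$, and $supp(z)\cap B=\emptyset$. Then (\ref{wlpa2}) gives $\|z\|\le C_{la}\|t^{-1}x+1_{\eta B}\|$, and multiplying through by $t$ recovers $\|x+t1_{\varepsilon A}\|\le C_{la}\|x+t1_{\eta B}\|$, which is (\ref{wlpa}) with the same constant.

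For the converse, assume $(e_n)$ has $w$-left Property $(A)$ with constant $C_{la}$, and take $x$ with $\sup_j|e_j^*(x)|\le 1$, $A<B$, $w(A)\le w(B)$, $supp(x)\cap B=\emptyset$ and $\eta\in\mathcal{A}$. Set $y=x-P_A(x)$, so that $supp(y)\cap(A\cup B)=\emptyset$ and $\sup_j|e_j^*(y)|\le 1$. Writing $a_i=e_i^*(x)$ for $i\in A$, each $a_i\in[-1,1]$, so (the scalars being real) the vector $P_A(x)=\sum_{i\in A}a_ie_i$ lies in the convex hull of the finitely many sign vectors $\{1_{\varepsilon A}:\varepsilon\in\mathcal{A}\}$; that is, $P_A(x)=\sum_{\varepsilon}\lambda_\varepsilon 1_{\varepsilon A}$ with $\lambda_\varepsilon\ge 0$ and $\sum_\varepsilon\lambda_\varepsilon=1$. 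Hence $x=\sum_\varepsilon\lambda_\varepsilon\,(y+1_{\varepsilon A})$, and applying $w$-left Property $(A)$ with $t=1$ to each $y+1_{\varepsilon A}$ (legitimate since $\sup_j|e_j^*(y)|\le 1$), the convexity of the norm yields
\[\|x\|\le\sum_\varepsilon\lambda_\varepsilon\|y+1_{\varepsilon A}\|\le C_{la}\sum_\varepsilon\lambda_\varepsilon\|y+1_{\eta B}\|=C_{la}\|y+1_{\eta B}\|,\]
which is exactly (\ref{wlpa2}) since $y+1_{\eta B}=x-P_A(x)+1_{\eta B}$.

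The bookkeeping checks (that $A<B$, $w(A)\le w(B)$ and the support conditions are preserved under these substitutions, and that $\sup_j|e_j^*|\le 1$ survives the deletion of $P_A(x)$) are routine and I would not belabor them. The step I expect to be the crux is the convexity argument in the converse direction: the hypothesis of $w$-left Property $(A)$ only ever places coefficients of modulus exactly $t$ on the set $A$, whereas in (\ref{wlpa2}) the coefficients of $x$ on $A$ sitting inside $\|x\|$ may have any modulus at most $1$. Bridging this gap is precisely what the averaging over all sign patterns $\varepsilon\in\mathcal{A}$ accomplishes, using that $[-1,1]^{|A|}$ is the convex hull of its $\pm1$ vertices in the real scalar field; this is also what allows the constant $C_{la}$ to pass through the equivalence unchanged.
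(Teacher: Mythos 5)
Your proposal is correct and takes essentially the same route as the paper's own proof: one direction via the substitution $z=t^{-1}x+1_{\varepsilon A}$ (the paper uses $y=x+1_{\varepsilon A}$ with the scaling in $t$ left implicit), and the other via writing $P_A(x)$ as a convex combination of the sign vectors $1_{\varepsilon A}$ and averaging, exactly as in the paper. The only differences are cosmetic, namely that you make the homogeneity normalization and the support and weight bookkeeping explicit.
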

\begin{proof}
First observe that \cite[Lemma 2.4]{BDKOW} implies for the proof of
the result it is sufficient to consider only finitely supported
$x\in X$.

Let $(e_n)$ has $w$-left Property $(A)$ with constant $C_{la}$, $x\in
X$, $\sup_j|e_j^*(x)|\leq 1$, $A,B\in \N^{<\infty}$, $A<B$, $w(A)\leq
w(B)$, $supp(x)\cap  B=\emptyset$ and $\varepsilon, \eta\in \mathcal{A}$. From
the $w$-left Property $(A)$ of the basis we can write
 \begin{align*}
 \|P_{A^c}(x)+1_{\varepsilon A}\|&\leq C_{la}\|P_{A^c}(x)+1_{\eta B}\|\\
 &=C_{la}\|x-P_{A}(x)+1_{\eta B}\|.
 \end{align*}
Since $x$ belongs to the convex hull of $\{P_{A^c}(x)+1_{\varepsilon A}:\varepsilon \in \mathcal{A}\}$, so we get
 \begin{align*}
 \|x\|\leq C_{la}\|x-P_{A}(x)+1_{\eta B}\|.
 \end{align*}
 Conversely, for any  $x\in X$ with $\sup_j |e_j^*(x)|\leq 1$, $A,B\in \N^{<\infty}$, $A<B$, $w(A)\leq w(B)$, $supp(x)\cap  (A\cup B)=\emptyset$ and $\varepsilon,\eta\in \mathcal{A}$,
 consider $y=x+1_{\varepsilon A}$. Now from $(\ref{wlpa2})$ we get
  \begin{align*}
\|y\|= \|x+1_{\varepsilon A}\|&\leq C_{la}\|y-P_{A}(y)+1_{\eta B}\|\\
 &=C_{la}\|x+1_{\eta B}\|.
 \end{align*}
 \end{proof}

Similarly we can prove the following results for a basis with
$w$-right Property $(A)$.
\begin{Prop}
A basis $(e_n)$ of a Banach space $X$ satisfies $C_{ra}$-$w$-right Property $(A)$
if and only if
\begin{align}
\|x\|\leq C_{ra} \|x-P_A(x)+1_{\eta B}\|
\end{align}
where $x\in X$, $\sup_j |e_j^*(x)|\leq 1$, $A,B\in \N^{<\infty}$,
$B<A$, $w(A)\leq w(B)$, $supp(x)\cap B=\emptyset$ and $\eta\in \mathcal{A}$.
\end{Prop}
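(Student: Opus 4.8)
The plan is to mimic the proof of Proposition~\ref{proertyAalternative} line by line, simply swapping the order relation $A<B$ for $B<A$ throughout and invoking $w$-right Property $(A)$ in place of $w$-left Property $(A)$. The two implications are entirely symmetric, so nothing genuinely new is needed; the statement is the exact analogue for the right-sided version.

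First I would prove the forward direction. Assume $(e_n)$ has $C_{ra}$-$w$-right Property $(A)$, and take $x\in X$ with $\sup_j|e_j^*(x)|\leq 1$, together with $A,B\in\N^{<\infty}$ satisfying $B<A$, $w(A)\leq w(B)$ and $supp(x)\cap B=\emptyset$. For each $\varepsilon\in\mathcal{A}$, apply the defining inequality of $w$-right Property $(A)$ to the vector $P_{A^c}(x)$ (whose coefficients are bounded by $1$ and whose support misses both $A$ and $B$) with parameter $t=1$, giving
\begin{align*}
\|P_{A^c}(x)+1_{\varepsilon A}\|\leq C_{ra}\|P_{A^c}(x)+1_{\eta B}\|=C_{ra}\|x-P_A(x)+1_{\eta B}\|.
\end{align*}
Since $x$ lies in the convex hull of $\{P_{A^c}(x)+1_{\varepsilon A}:\varepsilon\in\mathcal{A}\}$, averaging over $\varepsilon$ and using the triangle inequality yields $\|x\|\leq C_{ra}\|x-P_A(x)+1_{\eta B}\|$, which is the desired estimate.

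For the converse I would assume the inequality $\|x\|\leq C_{ra}\|x-P_A(x)+1_{\eta B}\|$ holds under the stated hypotheses and recover $w$-right Property $(A)$. Given $x$ with $\sup_j|e_j^*(x)|\leq 1$ and disjoint $A,B$ with $B<A$, $w(A)\leq w(B)$, set $y=x+1_{\varepsilon A}$ and apply the hypothesis to $y$: since $P_A(y)=1_{\varepsilon A}$, we get $\|x+1_{\varepsilon A}\|=\|y\|\leq C_{ra}\|y-P_A(y)+1_{\eta B}\|=C_{ra}\|x+1_{\eta B}\|$, which is $w$-right Property $(A)$ in the normalized case $t=1$; the general $t$ case follows by scaling, as usual. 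I expect no real obstacle here — the only point requiring a moment's care is the reduction to finitely supported $x$, which is precisely what \cite[Lemma~2.4]{BDKOW} supplies, exactly as invoked at the start of Proposition~\ref{proertyAalternative}. Accordingly I would simply write that the proof is identical to that of Proposition~\ref{proertyAalternative} after replacing $A<B$ by $B<A$.
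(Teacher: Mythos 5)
Your proposal is correct and is exactly what the paper does: the paper states this proposition with no separate argument, remarking only that it follows ``similarly'' to Proposition~\ref{proertyAalternative}, i.e.\ by repeating that proof with $A<B$ replaced by $B<A$ and $w$-left Property $(A)$ by $w$-right Property $(A)$. Your convex-hull step in the forward direction and the substitution $y=x+1_{\varepsilon A}$ with $P_A(y)=1_{\varepsilon A}$ in the converse are precisely the paper's argument, with the scaling to general $t$ and the reduction to finitely supported $x$ via \cite[Lemma~2.4]{BDKOW} handled as in the original.
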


Before proving the next result we first recall the following result
from \cite{BDKOW}.

\begin{lem}\label{lem:truncation}
Let $(e_n)$ be a $C_q$-quasi-greedy basis of a Banach space $X$,
$x\in X$ and $\lambda>0$. If $T_\lambda$ is an operator defined on
$X$ by
\begin{align*}
T_\lambda(x)=\sum_{i \in A} \lambda sgn(e_i^*(x))e_i+\sum_{i \in
A^c}e_i^*(x)e_i
\end{align*}
where $A=\{i: \lambda<|e_i^*(x)|\}$, then $\|T_\lambda(x)\|\leq
(C_q+1)\|x\|$.
\end{lem}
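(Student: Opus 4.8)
The plan is to control $\|T_\lambda(x)\|$ by expressing the truncation operator as a perturbation of $x$ by greedy-type projections, so that the quasi-greedy bound $\|G_m\|\leq C_q$ can be applied. First I would rewrite the operator in the form
\begin{align*}
T_\lambda(x)=x-\sum_{i\in A}\bigl(e_i^*(x)-\lambda\,\mathrm{sgn}(e_i^*(x))\bigr)e_i,
\end{align*}
where $A=\{i:\lambda<|e_i^*(x)|\}$ is precisely the set of coordinates whose coefficients exceed $\lambda$ in modulus. On $A$ we have $|e_i^*(x)|>\lambda$, so each corrected coefficient $e_i^*(x)-\lambda\,\mathrm{sgn}(e_i^*(x))$ has the \emph{same sign} as $e_i^*(x)$ and modulus $|e_i^*(x)|-\lambda$, which is strictly smaller than the original.

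The key observation I would exploit is that the vector $y:=\sum_{i\in A}\bigl(e_i^*(x)-\lambda\,\mathrm{sgn}(e_i^*(x))\bigr)e_i$ is itself a greedy sum of a suitable vector. Concretely, consider $z:=x-\lambda\sum_{i\in A}\mathrm{sgn}(e_i^*(x))e_i$, whose coefficients agree with those of $x$ off $A$ and equal $e_i^*(x)-\lambda\,\mathrm{sgn}(e_i^*(x))$ on $A$. Because the coordinates in $A$ were the largest (all exceeding $\lambda$) while those outside $A$ satisfy $|e_i^*(z)|=|e_i^*(x)|\leq\lambda$, after subtracting $\lambda$ the coordinates on $A$ remain the top $|A|$ coordinates of $z$ in modulus (each is $|e_i^*(x)|-\lambda\geq 0$, dominated by nothing outside $A$ since those are $\leq\lambda$; the cleanest route is to instead apply a greedy projection directly to $x$). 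The approach I favor is: set $m=|A|$ so that $A=\Lambda_m(x)$ is a valid greedy set for $x$, whence $G_m(x)=\sum_{i\in A}e_i^*(x)e_i$ and $\|G_m(x)\|\leq C_q\|x\|$. Then I would write
\begin{align*}
T_\lambda(x)=\bigl(x-G_m(x)\bigr)+\sum_{i\in A}\lambda\,\mathrm{sgn}(e_i^*(x))e_i,
\end{align*}
and the second sum is a greedy sum of $G_m(x)$ scaled appropriately: since all $|e_i^*(x)|>\lambda$, the vector $\sum_{i\in A}\lambda\,\mathrm{sgn}(e_i^*(x))e_i$ is obtained from $G_m(x)$ by the truncation at level $\lambda$, and one can show directly that it equals a greedy approximant of $G_m(x)+$ (a harmless tail). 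The bound $\|T_\lambda(x)\|\leq(C_q+1)\|x\|$ should then follow by combining $\|x-G_m(x)\|\leq(C_q+1)\|x\|$ with a quasi-greedy estimate on the $\lambda$-level piece.

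The step I expect to be the main obstacle is justifying that the $\lambda$-truncated part is genuinely a greedy approximant (so that the quasi-greedy inequality applies) rather than merely a coordinate projection; the signs and the uniform level $\lambda$ must be handled carefully so that the relevant set $A$ remains a \emph{valid} greedy set at each stage. A clean way to sidestep sign issues is to apply the quasi-greedy bound to the auxiliary vector $x+\sum_{i\in A}\lambda\,\mathrm{sgn}(e_i^*(x))e_i$, whose top $|A|$ coordinates are exactly those in $A$ with augmented modulus $|e_i^*(x)|+\lambda$, so that its $m$-th greedy approximant is $\sum_{i\in A}(e_i^*(x)+\lambda\,\mathrm{sgn}(e_i^*(x)))e_i$; subtracting this from the auxiliary vector recovers $x-G_m(x)$, and rearranging yields $T_\lambda(x)$ as a difference of the auxiliary vector and its greedy approximant. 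Since quasi-greediness gives $\|v-G_m(v)\|\leq(C_q+1)\|v\|$ for any $v$, and the auxiliary vector $v$ has the same norm bound structure as $x$ (indeed $T_\lambda(x)=v-G_m(v)$ for this choice of $v$ with $\|v\|=\|x\|$ once the signs align), the estimate $\|T_\lambda(x)\|\leq(C_q+1)\|x\|$ drops out. I would then record that the borderline coordinates with $|e_i^*(x)|=\lambda$ cause no difficulty since the greedy set selection tolerates ties.
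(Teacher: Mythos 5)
You have the right starting identity, $T_\lambda(x)=(x-P_A(x))+\lambda 1_{\varepsilon A}$ with $\varepsilon_i=\mathrm{sgn}(e_i^*(x))$, but both routes you sketch fail at the decisive step. In the first route you need a bound on $\|\lambda 1_{\varepsilon A}\|$ in terms of $\|x\|$; your justification --- that $\lambda 1_{\varepsilon A}$ ``equals a greedy approximant of $G_m(x)$ plus a harmless tail'' --- is circular, since $\lambda 1_{\varepsilon A}$ is exactly the $\lambda$-truncation of $G_m(x)$, i.e.\ the very kind of object the lemma is trying to control. One can bound it non-circularly via (\ref{cuc}) and (\ref{min}) (giving roughly $\|\lambda 1_{\varepsilon A}\|\leq 8C_q^3\|x\|$), but then your triangle-inequality splitting produces a constant of order $1+C_q+8C_q^3$, not the claimed $C_q+1$; no additive decomposition will give the sharp constant. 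The second, ``clean'' route rests on a false identity: for $v=x+\lambda 1_{\varepsilon A}$ the set $A$ is indeed a greedy set of $v$ of size $m=|A|$, but $G_m(v)=P_A(x)+\lambda 1_{\varepsilon A}$, so $v-G_m(v)=x-P_A(x)$, which is \emph{not} $T_\lambda(x)$ --- the truncated piece cancels out entirely --- and the assertion $\|v\|=\|x\|$ has no justification whatsoever.

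The missing idea (this is essentially the argument in \cite{BDKOW}, from which the present paper quotes Lemma~\ref{lem:truncation} without proof) is to exhibit $T_\lambda(x)$ as a \emph{convex combination} of $x$ and greedy remainders of $x$, rather than as a sum of two pieces. Order $A=\{i_1,\dots,i_m\}$ so that $a_j:=|e_{i_j}^*(x)|$ is non-increasing (note $A$ is finite since $e_i^*(x)\to 0$); then $a_1\geq\dots\geq a_m>\lambda$, and each initial segment $\{i_1,\dots,i_k\}$ is a legitimate greedy set for $x$ because every coefficient outside $A$ has modulus at most $\lambda$. Put $c_0=\lambda/a_1$, $c_k=\lambda/a_{k+1}-\lambda/a_k$ for $1\leq k\leq m-1$, and $c_m=1-\lambda/a_m$; these are nonnegative and sum to $1$. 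A direct check of coefficients gives
\begin{align*}
T_\lambda(x)=c_0\,x+\sum_{k=1}^m c_k\bigl(x-G_k(x)\bigr),
\end{align*}
since the coefficient of $e_{i_j}$ on the right equals $e_{i_j}^*(x)\sum_{k=0}^{j-1}c_k=e_{i_j}^*(x)\,\lambda/a_j=\lambda\,\mathrm{sgn}(e_{i_j}^*(x))$, while the coefficients off $A$ are unchanged. Quasi-greediness gives $\|x-G_k(x)\|\leq(1+C_q)\|x\|$ for every $k$, so convexity yields $\|T_\lambda(x)\|\leq(1+C_q)\|x\|$, which is the stated bound with the stated constant.
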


\begin{thm}
Let $(e_n)$ be a basis of a Banach space $X$. \bla
\item If $(e_n)$ is $C_p$-$w$-partially greedy, then $(e_n)$ is $(C_p+1)$-quasi-greedy and has $C_p$-$w$-left Property $(A)$.
\item  If $(e_n)$ has $C_{la}$-$w$-left Property $(A)$ and is $C_q$-quasi-greedy then $(e_n)$ is $(C_q+1)C_{la}$-$w$-partially greedy.
\el
\end{thm}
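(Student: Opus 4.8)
The plan is to prove the two implications separately, noting that part $(a)$ has already been established in Proposition~\ref{prop:partially implies qlpa} (it is literally the same statement), so the only genuine work is in part $(b)$. For part $(b)$, the hypotheses are that $(e_n)$ is $C_q$-quasi-greedy and satisfies $C_{la}$-$w$-left Property $(A)$, and the goal is the $w$-partially greedy inequality $\|x-G_m(x)\|\leq (C_q+1)C_{la}\,\widetilde\sigma^L_{w(\Lambda_m(x))}(x)$. First I would fix $x\in X$ and $m\in\N$, and fix an arbitrary $A\in\N^{<\infty}$ with $A<\al_m(x)$ and $w(A)\leq w(\Lambda_m(x))$; it suffices to bound $\|x-G_m(x)\|$ by $(C_q+1)C_{la}\|x-P_A(x)\|$ and then take the infimum over such $A$.

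The key idea is to apply the reformulation of $w$-left Property $(A)$ from Proposition~\ref{proertyAalternative}, namely $\|z\|\leq C_{la}\|z-P_U(z)+1_{\eta V}\|$ for suitable $z$ normalized so that $\sup_j|e_j^*(z)|\leq 1$, together with the truncation operator $T_\lambda$ from Lemma~\ref{lem:truncation}, whose norm is controlled by $C_q+1$. The natural choice is to set $\la=\min_{i\in\Lambda_m(x)}|e_i^*(x)|$ and consider $y=x-G_m(x)+P_A(x)$, or a suitably truncated/normalized version thereof. The point is that $x-G_m(x)=y-P_A(y)$ on the relevant supports, while $P_A(x)$ plays the role of the ``$P_A$'' term and the greedy set $\Lambda_m(x)$ plays the role of ``$B$'': one has $A<\al_m(x)=\min\Lambda_m(x)$ and $w(A)\leq w(\Lambda_m(x))$, exactly matching the configuration $A<B$, $w(A)\leq w(B)$ required by $w$-left Property $(A)$. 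I would apply $T_\la$ to flatten the coefficients on $\Lambda_m(x)$ down to level $\la$ (losing a factor $C_q+1$), then apply the inequality (\ref{wlpa2}) with $B=\Lambda_m(x)$ to replace the flattened greedy part by the indicator $1_{\eta\Lambda_m(x)}$ and simultaneously delete $P_A$, and finally rescale by $\la$ to return to the original coefficient sizes.

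Concretely, after normalizing by $\la$ so that all coefficients of the ``tail'' $x-G_m(x)$ have modulus at most $1$, the vector $\la^{-1}(x-G_m(x))$ has $\sup_j|e_j^*|\leq 1$; the set $A$ sits strictly to the left of $\Lambda_m(x)$ with $w(A)\le w(\Lambda_m(x))$, so (\ref{wlpa2}) gives $\|\la^{-1}(x-P_A(x)-G_m(x))+ 1_{\eta\Lambda_m(x)}\|\le C_{la}\|\,\cdots\|$ in the direction that lets me compare $\|x-G_m(x)\|$ to $\|x-P_A(x)\|$. The factor $C_q+1$ enters because passing from the genuine greedy approximant $G_m(x)$ (whose coefficients on $\Lambda_m(x)$ are the large ones, each of modulus $\geq\la$) to the flat profile $\la\,1_{\eta\Lambda_m(x)}$ is exactly the truncation operation controlled by Lemma~\ref{lem:truncation}. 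Chaining the truncation bound and the Property-$(A)$ bound yields the constant $(C_q+1)C_{la}$.

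The main obstacle I anticipate is bookkeeping the supports and coefficient sizes so that the hypotheses of both Lemma~\ref{lem:truncation} and Proposition~\ref{proertyAalternative} are met simultaneously: one must ensure the vector fed into (\ref{wlpa2}) genuinely has sup-norm of coefficients at most $1$ after scaling by $\la$, that the greedy coefficients on $\Lambda_m(x)$ really do all dominate $\la$ (which holds since $\la=\min_{i\in\Lambda_m(x)}|e_i^*(x)|$ by definition of the greedy ordering), and that $A$ and $\Lambda_m(x)$ are disjoint with $A<\Lambda_m(x)$ and $w(A)\le w(\Lambda_m(x))$ (all immediate from $A<\al_m(x)$). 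I would also double-check the edge case $\la=0$, i.e. when $\Lambda_m(x)$ contains indices outside $\mathrm{supp}(x)$; here one can either argue by a density/perturbation reduction to finitely supported $x$ with distinct nonzero coefficients (as invoked via \cite[Lemma 2.4]{BDKOW} in the previous proof) or handle it directly, since in that degenerate case $G_m(x)$ and the relevant bounds simplify. Once these compatibility conditions are verified, the estimate follows by the described two-step chaining and taking the infimum over admissible $A$.
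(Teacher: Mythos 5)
Your proposal is correct and follows essentially the same route as the paper: part $(a)$ by citing Proposition~\ref{prop:partially implies qlpa}, and part $(b)$ by applying the reformulation in Proposition~\ref{proertyAalternative} with $B=\Lambda_m(x)$, $t=|e^*_{\rho(m)}(x)|$ and recognizing the resulting vector as $T_t(I-P_A)(x)$, so that Lemma~\ref{lem:truncation} yields the factor $(C_q+1)$ and hence the constant $(C_q+1)C_{la}$. The only (harmless) deviations are that you fix an arbitrary admissible $A$ and take the infimum at the end instead of choosing a near-optimal $A$ up to $\varepsilon$, and your intermediate vector should be $x-G_m(x)-P_A(x)+t1_{\eta\Lambda_m(x)}$ (with a minus sign on $P_A(x)$), as your subsequent description in fact indicates.
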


\begin{proof}
$(a)$ follows from  Proposition~\ref{prop:partially implies qlpa}.

Let $x\in X$ and $\varepsilon>0$. Now, choose $A\in \N^{< \infty}$
with $A<\Lambda_m(x),~w(A)\leq w(\Lambda_m(x))$ such that
$\|x-P_A(x)\|<
{\stackrel{\sim}\sigma}_{w(\Lambda_m(x))}^L(x)+\varepsilon$. Fix
$t=|e_{\rho(m)}^*(x)|$ and $\eta \in \mathcal{A}$ with $\eta_j=
sgn(e_j^*(x))$ for all $j$, then from
Proposition~\ref{proertyAalternative} and Lemma~\ref{lem:truncation}
it follows that
\begin{align*}
\|x-G_m(x)\|&\leq C_{la}\|x-P_{\Lambda_m(x)}-P_A(x)+t1_{\eta \Lambda_m(x)}\|\\
             &=C_{la}\|P_{(A\cup \Lambda_m(x))^c}(x-P_A(x))+t1_{\eta \Lambda_m(x)} \|\\
             &=C_{la}\|T_t(I-P_A)(x)\|\\
             &\leq (C_q+1)C_{la}\|x-P_A(x)\|\\
             &< (C_q+1)C_{la}({\stackrel{\sim}\sigma}_{w(\Lambda_m(x))}^L(x)+\varepsilon).
\end{align*}
From this we can conclude that $\|x-G_m(x)\|\leq (C_q+1)C_{la}
{\stackrel{\sim}\sigma}_{w(\Lambda_m(x))}^L(x)$ and thus the basis
is $(C_q+1)C_{la}$-$w$-partially greedy.
\end{proof}

In a similar fashion we can prove the following characterization of  $w$-reverse
partially greedy basis.

\begin{thm}
Let $(e_n)$ be a basis of a Banach space $X$. \bla
\item If $(e_n)$ is $C_{rp}$-$w$-reverse partially greedy, then $(e_n)$ is $(C_{rp}+1)$-quasi-greedy and has $C_{rp}$-$w$-left Property $(A)$.
\item If $(e_n)$ has $C_{ra}$-$w$-right Property $(A)$ and is $C_q$-quasi-greedy then $(e_n)$ is $(C_q+1)C_{ra}$-$w$-reverse partially greedy.
\el
\end{thm}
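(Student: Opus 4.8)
The plan is to mirror the proof of the preceding $w$-partially greedy / $w$-left Property $(A)$ characterization, everywhere reversing the order relation between the two index sets. Note that the conclusion of part $(a)$ should read ``$w$-right Property $(A)$'' (the word ``left'' appears to be a typo): this is what the reverse construction produces and what part $(b)$ inverts. With that correction, both implications follow the earlier templates verbatim, swapping $A<B$ for $B<A$, $\al_m(x)$ for $\be_m(x)$, and the left infimum ${\stackrel{\sim}\sigma}_{w(\Lambda_m(x))}^L(x)$ for the right infimum ${\stackrel{\sim}\sigma}_{w(\Lambda_m(x))}^R(x)$.

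For part $(a)$, I would first get quasi-greediness exactly as before: the empty set is admissible in the reverse collection (as fixed by the convention stated after the definitions), so $A=\es$ gives $\|x-G_m(x)\|\le C_{rp}\|x\|$ and hence $\|G_m(x)\|\le(C_{rp}+1)\|x\|$. To extract $w$-right Property $(A)$, take $x$, $t\ge\sup_j|e_j^*(x)|$, and $A,B\in\N^{<\infty}$ with $B<A$, $w(A)\le w(B)$, with $A,B$ disjoint from each other and from $supp(x)$. Set $y=x+(t+\de)1_{\eta B}+t1_{\varepsilon A}$ for $\de>0$. Since the coordinates of $y$ on $B$ have modulus $t+\de$, strictly larger than all others, we have $\Lambda_{|B|}(y)=B$, so $G_{|B|}(y)=(t+\de)1_{\eta B}$ and $\be_{|B|}(y)=\max B$. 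Then $y-G_{|B|}(y)=x+t1_{\varepsilon A}$, while $A$ is admissible in ${\stackrel{\sim}\sigma}_{w(B)}^R(y)$ because $\max B<A$ and $w(A)\le w(B)$, and $y-P_A(y)=x+(t+\de)1_{\eta B}$. The reverse partially greedy inequality thus gives $\|x+t1_{\varepsilon A}\|\le C_{rp}\|x+(t+\de)1_{\eta B}\|$; letting $\de\to0$ yields $w$-right Property $(A)$ with constant $C_{rp}$.

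For part $(b)$, I would run the truncation argument against the right-sided alternative characterization of $w$-right Property $(A)$ established above. Fix $x$, $m$, and $\varepsilon>0$, and pick $S\in\N^{<\infty}$ with $\be_m(x)<S$ and $w(S)\le w(\Lambda_m(x))$ nearly realizing ${\stackrel{\sim}\sigma}_{w(\Lambda_m(x))}^R(x)$. Put $t=|e_{\rho(m)}^*(x)|$ and $\eta_j=sgn(e_j^*(x))$. Applied to $u=x-G_m(x)$ (whose coordinates all have modulus $\le t$), with deleted set $S$ lying to the right of the added set $\Lambda_m(x)$, the rescaled alternative characterization gives $\|u\|\le C_{ra}\|u-P_S(u)+t1_{\eta\Lambda_m(x)}\|$. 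Since $S\cap\Lambda_m(x)=\es$, the vector inside equals $P_{(\Lambda_m(x)\cup S)^c}(x)+t1_{\eta\Lambda_m(x)}=T_t\big((I-P_S)(x)\big)$, so Lemma~\ref{lem:truncation} bounds its norm by $(C_q+1)\|x-P_S(x)\|$. Combining, $\|x-G_m(x)\|\le(C_q+1)C_{ra}\|x-P_S(x)\|<(C_q+1)C_{ra}\big({\stackrel{\sim}\sigma}_{w(\Lambda_m(x))}^R(x)+\varepsilon\big)$, and $\varepsilon\to0$ finishes.

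The steps are routine given the earlier machinery; the only place needing care is verifying in part $(b)$ that $T_t$ applied to $(I-P_S)(x)$ produces exactly $t1_{\eta\Lambda_m(x)}$ on the greedy block while leaving the tail $P_{(\Lambda_m(x)\cup S)^c}(x)$ untouched. This hinges on the facts that every coordinate strictly exceeding $t$ lies in $\Lambda_m(x)$ and that coordinates of modulus exactly $t$ are fixed by $T_t$ up to sign, so ties at the threshold do not break the identity. Handling this threshold bookkeeping, together with the disjointness $S\cap\Lambda_m(x)=\es$ (which comes from $\be_m(x)<S$), is the main, though minor, obstacle.
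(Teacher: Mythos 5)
Your proposal is correct and follows exactly the route the paper intends: the paper gives no separate proof of this theorem, stating only that it follows ``in a similar fashion'' from the $w$-partially greedy case, and your argument is precisely that mirroring --- part $(a)$ reproduces Proposition~\ref{prop:partially implies qlpa} with the perturbed vector $y=x+(t+\de)1_{\eta B}+t1_{\varepsilon A}$ and $B<A$, and part $(b)$ reruns the truncation argument via the right-sided analogue of Proposition~\ref{proertyAalternative} and Lemma~\ref{lem:truncation}. You are also right that ``$w$-left Property $(A)$'' in part $(a)$ is a typo for ``$w$-right Property $(A)$''; the statement is only consistent with part $(b)$ (and with the reverse construction) after that correction.
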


We now prove that if a basis $(e_n)$ of a Banach space $X$ satisfies
both left Property $(A)$ and right Property $(A)$ then the basis
satisfies Property $(A)$. If $X$ is an infinite-dimensional Banach
space then for given $A,B\in \N^m$ we can find $C \in \N^m$ with
$A<C$ and $B<C$. Now by the left Property $(A)$ and right Property $(A)$
of the basis it can be easily proved that the basis satisfies
Property $(A)$. We now give another proof of this fact which has the
advantage of  working  for a \textit{finite} basis as well.

\begin{lem}\label{lem:property A}
Let $(e_n)$ be a basis of Banach space $X$. If $(e_n)$ satisfies
both left Property $(A)$ and right Property $(A)$ then the basis
satisfies Property $(A)$.
\end{lem}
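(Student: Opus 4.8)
The plan is to verify the defining inequality of Property $(A)$ directly, producing the constant $C_{la}C_{ra}$, rather than routing through an auxiliary block placed to the right of $A\cup B$ (which is what forces infinite dimension in the first argument). Fix $x\in X$, $t\geq\sup_j|e_j^*(x)|$, disjoint $A,B\in\N^{<\infty}$ with $|A|\leq|B|$ and $supp(x)\cap(A\cup B)=\emptyset$, and signs $\varepsilon,\eta\in\mathcal{A}$; the goal is to bound $\|x+t1_{\varepsilon A}\|$ by a multiple of $\|x+t1_{\eta B}\|$. The key idea is to transfer the mass from $A$ onto $B$ in two pieces: a rightward piece governed by left Property $(A)$ and a leftward piece governed by right Property $(A)$. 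To organize this, let $\theta$ be the $|B|$-th smallest element of $A\cup B$ and set
\begin{align*}
A_1=A\cap(-\infty,\theta],\quad A_2=A\cap(\theta,\infty),\quad B_1=B\cap(\theta,\infty),\quad B_2=B\cap(-\infty,\theta].
\end{align*}
Then $A=A_1\cup A_2$ and $B=B_1\cup B_2$ are disjoint splittings, and by construction $\max A_1\leq\theta<\min B_1$ and $\max B_2\leq\theta<\min A_2$, so $A_1<B_1$ and $B_2<A_2$.

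The first thing I would check is that these blocks have the correct cardinalities, which is the only genuinely combinatorial point. Since exactly $|B|$ elements of $A\cup B$ lie in $(-\infty,\theta]$, I get $|A_1|+|B_2|=|B|$, whence $|A_1|=|B|-|B_2|=|B_1|$; counting the complementary interval gives $|A_2|+|B_1|=|A|$, so $|A_2|=|A|-|B_1|=|A|-|A_1|\leq|B|-|A_1|=|B_2|$. Hence $|A_1|\leq|B_1|$ and $|A_2|\leq|B_2|$, i.e. each of the two transfers runs from a lighter block to a heavier one, as required by the one-sided properties. This is precisely where the proof departs from the infinite-dimensional one: no free coordinates outside $supp(x)\cup A\cup B$ are needed, because the intermediate configuration reuses the positions of $B$, so the argument survives for a finite basis.

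With the splitting fixed I would apply the two properties in succession. Freezing $t1_{\varepsilon A_2}$ inside the vector and noting that $A_1,B_1$ are disjoint from $supp(x+t1_{\varepsilon A_2})$ (both reductions come down to $A\cap B=\emptyset$ and $supp(x)\cap(A\cup B)=\emptyset$), left Property $(A)$ applied to $A_1<B_1$ with $|A_1|\leq|B_1|$ gives
\begin{align*}
\|x+t1_{\varepsilon A}\|=\|(x+t1_{\varepsilon A_2})+t1_{\varepsilon A_1}\|\leq C_{la}\,\|(x+t1_{\varepsilon A_2})+t1_{\eta B_1}\|.
\end{align*}
Now freezing $t1_{\eta B_1}$ and applying right Property $(A)$ to $B_2<A_2$ with $|A_2|\leq|B_2|$ gives
\begin{align*}
\|(x+t1_{\eta B_1})+t1_{\varepsilon A_2}\|\leq C_{ra}\,\|(x+t1_{\eta B_1})+t1_{\eta B_2}\|=C_{ra}\,\|x+t1_{\eta B}\|.
\end{align*}
Chaining the two estimates yields $\|x+t1_{\varepsilon A}\|\leq C_{la}C_{ra}\,\|x+t1_{\eta B}\|$, so the basis has Property $(A)$ with $C_a\leq C_{la}C_{ra}$. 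I expect the cardinality bookkeeping of the second paragraph to be the crux; the support-disjointness side conditions and the bound $t\geq\sup_j|e_j^*(\cdot)|$ for the two intermediate vectors are routine once the splitting is in place.
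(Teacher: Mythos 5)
Your proof is correct, and while its combinatorial core coincides with the paper's, the analytic assembly is genuinely different. The splitting you build via the threshold $\theta$ --- $A=A_1\cup A_2$, $B=B_1\cup B_2$ with $A_1<B_1$, $B_2<A_2$, $|A_1|=|B_1|$ and $|A_2|\leq |B_2|$ --- is in substance the same decomposition the paper obtains by locating the first index $j$ with $a_{j+1}>b_{m-j}$. The divergence is in what happens next. The paper splits the background vector $x=x_1+x_2$ by position, applies the triangle inequality $\|x+t1_{\varepsilon A}\|\leq \|x_1+t1_{\varepsilon A_1}\|+\|x_2+t1_{\varepsilon A_2}\|$, moves each block separately, and then reassembles $x_1+t1_{\eta B_1}$ and $x_2+t1_{\eta B_2}$ into $x+t1_{\eta B}$ via basis projections; this recombination is what introduces the basis constant and produces $C(2K_b+1)$. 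You never split $x$: you chain the two one-sided properties through the single intermediate vector $x+t1_{\varepsilon A_2}+t1_{\eta B_1}$, freezing one block (whose coefficients have modulus exactly $t$, so the conditions $t\geq \sup_j|e_j^*(\cdot)|$ and support-disjointness are preserved, as you verify) while transferring the other. This buys the constant $C_{la}C_{ra}$ with no dependence on $K_b$, works directly for $|A|\leq |B|$ instead of reducing to equal cardinalities, and handles the cases $A<B$, $B<A$ and the interleaved case uniformly, with no case analysis. One small point to make explicit: when $A_2=\emptyset$ (e.g.\ if $A<B$ with $|A|<|B|$), your second step invokes right Property $(A)$ with an empty set, i.e.\ $\|z\|\leq C_{ra}\|z+t1_{\eta B_2}\|$; this is legitimate here because the paper's convention expressly includes the empty set in the collection $\{A\in\N^{<\infty}:B<A,\ |A|\leq |B|\}$. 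Like the paper's argument, yours requires no coordinates beyond $supp(x)\cup A\cup B$, so it too is valid for a finite basis, which was the paper's stated motivation for this lemma.
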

\begin{proof}
Let  $C$ be a constant such that for any $x\in X$, $t\geq \sup_n
|e_n^*(x)|,~\varepsilon,\eta\in \mathcal{A}$,  $A,B\in
\N^{<\infty}$, $supp(x)\cap (A\cup B)=\emptyset$ and $|A|\leq |B|$
we have
\begin{align*}
\|x+t1_{\varepsilon A}\|\leq C\|x+t1_{\eta B}\|
\end{align*}
where either $A<B$ or $B>A$.

Observe that to prove Property $(A)$ it is
sufficient to consider any two disjoint sets  $A,B$ of same cardinality.
Choose any $x\in X$, $t\geq \sup_n |e_n^*(x)|,~\varepsilon,\eta\in \mathcal{A}$  and any two disjoint sets  $A,B\in \N^m$ with
$supp(x)\cap (A\cup B)=\emptyset$.  Let $A=\{a_1<a_2<\ldots <a_m\}$
and $B=\{b_1<b_2<\ldots <b_m\}$. If $A <  B$ or $B< A$, then
$\|x+t1_{\varepsilon A}\|\leq C\|x+t1_{\eta B}\|$ and $\|x+t1_{\eta
B}\|\leq C\|x+t1_{\varepsilon A}\|$. So, for the rest of the proof
we assume that this is not the case.

Without loss of generality we can assume that $a_1<b_m$. If we
compare $a_2$ with $b_{m-1}$ then there can be two possibilities:
$a_2<b_{m-1}$ or $a_2>b_{m-1}$. If $a_2>b_{m-1}$ then we will stop
the process; otherwise we will continue in the same manner. By the
assumptions on the sets $A,B$ we can find the first $j$, $1\leq j <
m$, such that $a_{j+1}>b_{m-j}$.

Thus we can write

$A=A_1\cup A_2$ and $B=B_1\cup B_2$ where $|A_i|=|B_i|$, $A_1<B_1$,
$A_2>B_2$,  $A_1=\{a_1<\ldots<a_j\}<B_1=\{b_{m-j+1}<\ldots<b_m\}$
and $A_2=\{a_{j+1}<\ldots<a_m\}>B_2=\{b_{1}<\ldots<b_{m-j}\}$

Let $x_1=\sum_{i>min B_1} e_i^*(x) e_i$, $x_2=x-x_1$,
$y_1=\sum_{i<max A_1} e_i^*(x) e_i$ and $y_2=x-y_1$. Then

\begin{align*}
\|x_i+t1_{\varepsilon A_i}\|\leq C\|x_i+t1_{\eta B_i}\|,~~  i=1,2.
\end{align*}

and
\begin{align*}
\|y_i+t1_{\eta B_i}\| \leq C\|y_i+1_{\varepsilon A_i}\| ,~~  i=1,2.
\end{align*}

Now we can write
\begin{align*}
\|x+t1_{\varepsilon A}\|&\leq \|x_1+t1_{\varepsilon A_1}\|+
\|x_2+t1_{\varepsilon A_2}\|\\
&\leq C(\|x_1+t1_{\eta B_1}\|+\|x_2+t1_{\eta B_2}\|)\\
&\leq C (2K_b+1) \|x+t1_{\eta B}\|
\end{align*}
and
\begin{align*}
\|x+t1_{\eta B}\|&\leq \|y_1+t1_{\eta B_1}\| + \|y_2+t1_{\eta
B_2}\|\\
&\leq C(\|y_1+t1_{\varepsilon A_1}\|+\|y_2+t1_{\varepsilon
A_2}\|)\\
&\leq C (2K_b+1) \|x+t1_{\varepsilon A}\|
\end{align*}
where $K_b$ is the basis constant for $(e_n)$.
\end{proof}

Consider any weight sequence $w=(w_n)$ bounded away from 0 and
$\infty$. For such a weight sequence we now prove that any basis
satisfying both $w$-left Property $(A)$ and $w$-right Property $(A)$ satisfies
$w$-Property $(A)$. If $X$ is an infinite-dimensional Banach space
then this result can be proved by a simple argument. But we will present a proof
which works for both finite and infinite-dimensional Banach spaces.

To prove this result we first extend the idea used in
\cite[Proposition 4.9]{DKTW} to prove that in this case any basis
$(e_n)$ has $w$-left Property $(A)$ ($w$-right Property $(A)$) if and
only if $(e_n)$ has left Property $(A)$ (right Property $(A)$).

\begin{Prop}\label{prop:left property}
Let $w$ be a weight sequence with $0<\inf w_n\leq \sup w_n <\infty$.
Then $(e_n)$ has $w$-left Property $(A)$ if and only if $(e_n)$ has left
Property $(A)$.
\end{Prop}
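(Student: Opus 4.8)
The plan is to prove both implications by reducing, in each case, to the defining inequality $\|x+t1_{\varepsilon A}\|\le C\|x+t1_{\eta B}\|$ and controlling it through a block decomposition of the ``source'' set $A$. Write $\mu=\inf_n w_n$ and $\lambda=\sup_n w_n$, so that $0<\mu\le\lambda<\infty$ and $\mu|S|\le w(S)\le\lambda|S|$ for every finite $S$. Two elementary tools will be used repeatedly. First, applying the relevant Property $(A)$ with empty source set (as in the proofs of Theorem~\ref{pg} and Proposition~\ref{prop:partially implies qlpa}) gives $\|x\|\le C\|x+t1_{\eta B}\|$ whenever $supp(x)\cap B=\emptyset$ and $t\ge\sup_j|e_j^*(x)|$. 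Second, from this one gets a coefficient-deflation estimate: since $\|t1_{\eta B}\|\le\|x+t1_{\eta B}\|+\|x\|\le(1+C)\|x+t1_{\eta B}\|$, for every integer $M\ge1$ one has $\|x+Mt1_{\eta B}\|\le[1+(M-1)(1+C)]\|x+t1_{\eta B}\|$. Both implications are structurally symmetric, differing only in whether $A$ is decomposed by cardinality or by weight.

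For the implication that left Property $(A)$ implies $w$-left Property $(A)$, fix $A<B$ with $w(A)\le w(B)$; then $|A|\le(\lambda/\mu)|B|$. I would split $A$ into consecutive blocks $A_1<\cdots<A_k<B$ with $|A_j|\le|B|$ and $k\le\lceil\lambda/\mu\rceil$ — this is always possible since a single index has cardinality $1\le|B|$. Using the convex decomposition $x+t1_{\varepsilon A}=\frac1k\sum_{j=1}^k(x+tk\,1_{\varepsilon A_j})$, the triangle inequality and left Property $(A)$ applied to each block (with the enlarged coefficient $tk$ and target $B$, legitimate because $A_j<B$ and $|A_j|\le|B|$) give $\|x+t1_{\varepsilon A}\|\le C_{la}\|x+tk\,1_{\eta B}\|$; the coefficient-deflation estimate then removes the factor $k$ and yields a uniform constant.

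The converse implication, that $w$-left Property $(A)$ implies left Property $(A)$, runs along the same lines but now I must split $A$ into blocks of weight $\le w(B)$, and here lies the main obstacle: a single heavy index can have weight up to $\lambda$, which may exceed $w(B)$ when $B$ is short. I would therefore separate two regimes. When $w(B)\ge 2\lambda$ every index fits, a first-fit packing produces $k\le 1+2\lambda/\mu$ blocks $A_j<B$ with $w(A_j)\le w(B)$, and the averaging-plus-deflation argument goes through verbatim. When $w(B)<2\lambda$ one has $|A|\le|B|<2\lambda/\mu$, so the sets are of bounded size; here the splitting fails, and I would instead use the crude bound $\|x+t1_{\varepsilon A}\|\le\|x\|+t|A|$ (normalization gives $\|1_{\varepsilon A}\|\le|A|$), together with $\|x\|\le C_{la}\|x+t1_{\eta B}\|$ and $t=|e_b^*(x+t1_{\eta B})|\le 2K_b\|x+t1_{\eta B}\|$ for any $b\in B$, to obtain $\|x+t1_{\varepsilon A}\|\le(C_{la}+2|A|K_b)\|x+t1_{\eta B}\|$ with $|A|$ bounded.

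I expect the delicate point to be precisely this bounded-size regime: the block-splitting engine that drives every other case breaks down exactly when $B$ is too light to absorb an individual index, and it must be replaced by the direct triangle-inequality estimate, whose usefulness hinges on the a priori bound on $|A|$ available there. Reductions to finitely supported $x$ (via \cite[Lemma 2.4]{BDKOW}) and the equivalent form furnished by Proposition~\ref{proertyAalternative} may be invoked to streamline the bookkeeping, but the core of the argument is the interplay between block decomposition in the ``large'' regime and the crude estimate in the ``small'' regime.
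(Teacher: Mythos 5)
Your proof is correct and follows essentially the same route as the paper's: decompose $A$ into blocks (by cardinality in one direction, by weight via a packing argument in the other), apply the hypothesis blockwise with an averaging/triangle-inequality step, and handle the regime where packing fails by the crude bound $\|x\|\le C\|x+t1_{\eta B}\|$ together with $t\le 2K_b\|x+t1_{\eta B}\|$. The only cosmetic differences are that you inflate the coefficient to $tk$ and then deflate it (the paper instead scales $x$ to $x/N$, which is the same decomposition written differently), and you organize the case split around $w(B)$ versus $2\lambda$ rather than around $w(A)$ as the paper does.
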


\begin{proof}
For simplicity we assume that $0<\al=\inf w_n\leq \sup w_n =1$.

Let $(e_n)$ satisfies $w$-left Property $(A)$ with constant $C_{la}$.
Let $x\in X$, $t\geq \sup_n |e_n^*(x)|,~\varepsilon,\eta\in \mathcal{A}$, $A,B\in \N^{<\infty}$ with $A<B$, $|A|=|B|$ and $supp(x)\cap
(A\cup B)=\emptyset$.

If $w(A)\leq w(B)$ then from the $w$-left Property $(A)$  of
the basis we get
\begin{align*}
\|x+t1_{\varepsilon A}\|\leq C_{la} \|x+t1_{\eta B}\|.
\end{align*}

Now consider the case when $w(B)\leq w(A)$. If in this case $w(A)
\leq {\frac{2}{\al}}$ then $|A|,~|B| \leq {\frac{2}{\al^2}}$ and
thus
\begin{align*}
\|x+t1_{\varepsilon A}\|&\leq \|x+t1_{\eta B}\|+\|t1_{\varepsilon A}\|+\|t1_{\eta B}\|\\
&\leq  \|x+t1_{\eta B}\|+{\frac{4}{\al^2}}t\\
&=  \|x+t1_{\eta B}\|+{\frac{4}{\al^2}}\|t1_{\eta b}\|\\
&\leq  \|x+t1_{\eta B}\|+{\frac{8K_b}{\al^2}}\|x+t1_{\eta B}\|
\end{align*}
where $K_b$ is the basis constant of $(e_n)$ and $b\in B$.

If $w(A)>{\frac{2}{\al}}$ then $2<\al w(A) \leq \al |A|=\al|B|\leq
w(B)$. Thus we can partition $A$ into $N$ sets $A_1,\ldots,A_N$
satisfying $w(A_i)\leq w(B)\leq w(A_i)+1$, hence $w(A_i)\geq
{\frac{w(B)}{2}}$ and
\begin{align*}
N\leq {\frac{w(A)}{\frac{w(B)}{2}} }\leq {\frac{2}{\al}}.
\end{align*}

Now by combining the facts that $A_i<B$, $w(A_i)\leq w(B)$ with the
$w$-left Property $(A)$ of the basis we get
\begin{align*}
\|x+t1_{\varepsilon A}\|&\leq \sum_1^N\|{\frac{x}{N}}+t1_{\varepsilon A_i}\|\\
          &\leq C_{la}N\|{\frac{x}{N}}+t1_{\eta B}\|\\
          &\leq C_{la}(N\|x+t1_{\eta B}\|+(N-1)\|x\|)\\
          &\leq C_{la}(N\|x+t1_{\eta B}\|+C_{la}(N-1)\|x+t1_{\eta B}\|)\\
          &\leq {\frac {4C_{la}^2}{\al}}\|x+t1_{\eta B}\|
\end{align*}

and this proves that the basis satisfies left Property $(A)$.

Let $(e_n)$ satisfies left Property $(A)$ with constant $C$. Choose $x\in X$,
$t\geq \sup_n |e_n^*(x)|,~\varepsilon,\eta\in \mathcal{A}$,  $A,~B\in
\N^{<\infty}$ with $A<B$, $w(A)\leq w(B)$ and $supp(x)\cap (A\cup
B)=\emptyset$. Then $|A|\leq {\frac{1}{\al}}|B|$ and we can
partition $A$ into $N$ sets $A_1,\ldots,A_N$  with $|A_i|\leq |B|$,
$A_i<B$ and $N\leq 1+ {\frac{1}{\al}}$.

Now from the left Property $(A)$ of the basis we get
\begin{align*}
\|x+t1_{\varepsilon A}\|&\leq \sum_1^N\|{\frac{x}{N}}+t1_{\varepsilon A_i}\|\\
          &\leq CN\|{\frac{x}{N}}+t1_{\eta B}\|\\
          &\leq C(N\|x+t1_{\eta B}\|+(N-1)\|x\|)\\
          &\leq C(N\|x+t1_{\eta B}\|+C(N-1)\|x+t1_{\eta B}\|)\\
          &\leq 2C^2({1+{\frac{1}{\al}}})\|x+t1_{\eta B}\|.
\end{align*}
Thus $(e_n)$ satisfies $w$-left Property $(A)$.

\end{proof}

\begin{Prop}\label{prop:right property}
Let $w$ be a weight sequence with $0<\inf w_n\leq \sup w_n <\infty$.
Then $(e_n)$ satisfies $w$-right Property $(A)$ if and only if $(e_n)$
satisfies right Property $(A)$.
\end{Prop}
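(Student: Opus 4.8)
The plan is to mirror the proof of Proposition~\ref{prop:left property} almost verbatim, with the single change that the ordering constraint $A<B$ is everywhere replaced by $B<A$. As there, I would first normalize the weight so that $0<\al=\inf w_n\leq\sup w_n=1$ and then establish the two implications separately. The reason the left-property argument transfers so cleanly is that its only structural ingredient beyond the triangle inequality is the partition of the set $A$ into pieces $A_i\subseteq A$; since each $A_i$ is a subset of $A$, the order relation $B<A$ forces $B<A_i$ automatically, so every application of $w$-right Property $(A)$ (or right Property $(A)$) to a piece is legitimate.

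To show that $w$-right Property $(A)$ implies right Property $(A)$, I would take $B<A$ with $|A|=|B|$. If $w(A)\leq w(B)$ the hypothesis applies directly. If instead $w(B)\leq w(A)$, I would split exactly as in the left-property case: when $w(A)\leq 2/\al$ the cardinalities are bounded by $2/\al^2$, so the triangle inequality together with the basis constant $K_b$ closes the case; when $w(A)>2/\al$ I would partition $A$ into $N\leq 2/\al$ sets $A_1,\dots,A_N$ with $w(A_i)\leq w(B)$, apply $w$-right Property $(A)$ to each, and sum the resulting inequalities.

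For the converse, that right Property $(A)$ implies $w$-right Property $(A)$, I would take $B<A$ with $w(A)\leq w(B)$, observe that $|A|\leq(1/\al)|B|$, and partition $A$ into $N\leq 1+1/\al$ sets $A_i$ with $|A_i|\leq|B|$. Applying right Property $(A)$ to each piece and summing then yields $w$-right Property $(A)$ with a constant of the form $2C^2(1+1/\al)$, where $C$ is the right-Property-$(A)$ constant.

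The only step that genuinely requires a moment's thought --- and hence the nominal main obstacle --- is confirming that each piece of the partition still lies to the right of $B$. As noted in the first paragraph this is immediate from $A_i\subseteq A$, so in fact there is no real obstacle and the computation proceeds identically to the left-property case.
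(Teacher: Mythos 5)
Your proposal is correct and matches the paper's intent exactly: the paper states this proposition without a separate proof, relying on the same observation you make, namely that the argument for Proposition~\ref{prop:left property} carries over verbatim once $A<B$ is replaced by $B<A$, since every piece $A_i\subseteq A$ of the partition automatically satisfies $B<A_i$ (and the empty set is allowed, so the step $\|x\|\leq C\|x+t1_{\eta B}\|$ still applies). No gaps.
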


\begin{thm}
Let $w$ be a weight sequence with $0<\inf w_n\leq \sup w_n <\infty$.
If a basis $(e_n)$ satisfies both $w$-left Property $(A)$ and $w$-right
Property $(A)$ then the basis satisfies $w$-Property $(A)$.
\end{thm}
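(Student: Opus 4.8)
The plan is to push both hypotheses down to the unweighted setting, combine them through Lemma~\ref{lem:property A}, and then lift the resulting Property $(A)$ back up to $w$-Property $(A)$. First, since $w$ is bounded away from $0$ and $\infty$, Proposition~\ref{prop:left property} converts the hypothesis of $w$-left Property $(A)$ into ordinary left Property $(A)$, and Proposition~\ref{prop:right property} converts $w$-right Property $(A)$ into ordinary right Property $(A)$. With both one-sided unweighted properties in hand, Lemma~\ref{lem:property A} delivers Property $(A)$ for $(e_n)$, with a constant depending only on the two one-sided constants and the basis constant $K_b$.

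The one remaining step, and the only place where the weight bounds re-enter, is to show that Property $(A)$ implies $w$-Property $(A)$. This is the disjoint-set analogue of the half of Proposition~\ref{prop:left property} asserting that left Property $(A)$ implies $w$-left Property $(A)$, and I would argue it by the same partition trick with the ordering constraint simply dropped. Normalizing so that $\al=\inf_n w_n>0$ and $\sup_n w_n=1$, take disjoint $A,B\in\N^{<\infty}$ with $w(A)\le w(B)$, together with admissible $x$, $t\geq \sup_j|e_j^*(x)|$ and $\varepsilon,\eta\in\mathcal{A}$. The weight bounds give $|A|\le \al^{-1}w(A)\le \al^{-1}w(B)\le \al^{-1}|B|$, so $A$ splits into $N\le 1+\al^{-1}$ blocks $A_1,\dots,A_N$, each disjoint from $B$ with $|A_i|\le|B|$. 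Applying Property $(A)$ to each block against $B$ (with $x/N$ in place of $x$) and summing with the triangle inequality, while controlling the spurious $\|x\|$ terms by one further application of Property $(A)$, yields $\|x+t1_{\varepsilon A}\|\le C'\|x+t1_{\eta B}\|$ for a constant $C'$ depending only on the Property $(A)$ constant, $K_b$ and $\al$. This is exactly $w$-Property $(A)$.

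I do not anticipate a serious obstacle: the genuine work is already isolated in Propositions~\ref{prop:left property} and~\ref{prop:right property} and in Lemma~\ref{lem:property A}, and the summation bookkeeping in the last step is verbatim the computation performed in the proof of Proposition~\ref{prop:left property}, so I would simply reuse it. The one point worth flagging is that, unlike the one-sided results, the final partition requires no monotonicity of the blocks $A_i$; mere disjointness from $B$ suffices to invoke Property $(A)$, so the argument is if anything cleaner than its left and right counterparts and, like Lemma~\ref{lem:property A}, remains valid for a finite basis. Should one prefer, this last step can instead be replaced by a direct citation of the equivalence of Property $(A)$ and $w$-Property $(A)$ for weights bounded away from $0$ and $\infty$ from \cite[Proposition 4.9]{DKTW}.
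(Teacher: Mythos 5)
Your proposal is correct and follows essentially the same route as the paper's own proof: normalize the weight, use Propositions~\ref{prop:left property} and~\ref{prop:right property} together with Lemma~\ref{lem:property A} to obtain Property $(A)$, and then lift it to $w$-Property $(A)$ via the partition of $A$ into $N\leq 1+\frac{1}{\al}$ blocks with the $\frac{x}{N}$ trick, exactly as in the computation of Proposition~\ref{prop:left property}. Your observation that the blocks need only be disjoint from $B$ (no ordering constraint) matches what the paper implicitly relies on in its final step.
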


\begin{proof}
Let $0< \al=\inf w_n \leq \sup w_n=1$ and $(e_n)$ satisfies both
$w$-left Property $(A)$, $w$-right Property $(A)$. Then form
Propositions~\ref{prop:left property},~\ref{prop:right property} and
Lemma~\ref{lem:property A},  it follows that the basis satisfies
Property $(A)$. Let $C$ be the constant for Property $(A)$.

Let $x\in X$, $t\geq \sup_j |e_j^*(x)|$,  $A,B\in \N^{<\infty}$ with
$A\cap B=\emptyset$, $w(A)\leq w(B)$, $supp(x)\cap (A\cup
B)=\emptyset$ and $\varepsilon,\eta\in \mathcal{A}$. $w(A)\leq w(B)$ gives $|A|\leq
{\frac{1}{\al}}|B|$ and thus we we can partition $A$ into $N$ sets
$A_1,\ldots,A_N$  with $|A_i|\leq |B|$, $N\leq 1+ {\frac{1}{\al}}$.
Using the arguments similar to Proposition~\ref{prop:left property}
we get $\|x+1_{\varepsilon A}\|\leq
2C^2({1+{\frac{1}{\al}}})\|x+t1_{\eta B}\|$ and this proves that the
basis satisfies $w$-Property $(A)$.
\end{proof}

Next, we will prove the similar result for a weight sequence $(w_n)$ with
$\sum w_n=\infty$ and $\sup w_n<\infty$. We will consider some other conditions
on the weight sequences in the next section for any basis satisfying weaker conditions than $w$-left Property $(A)$ and $w$-right
Property $(A)$.

\begin{thm}\label{infinitesum}
Let $w$ be a weight sequence with $\sum w_n=\infty$ and $\sup
w_n<\infty$. If a basis $(e_n)$ satisfies both $w$-left Property $(A)$
and $w$-right Property $(A)$ then $(e_n)$ satisfies $w$-Property $(A)$.

\end{thm}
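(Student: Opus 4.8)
The plan is to argue directly, since the reduction to the unweighted Property $(A)$ via Propositions~\ref{prop:left property} and \ref{prop:right property} is not available here: those propositions require $\inf_n w_n>0$, which may well fail under the present hypotheses (indeed the genuinely new case is $\inf_n w_n=0$). So I would fix $x\in X$, $t\geq\sup_j|e_j^*(x)|$, signs $\varepsilon,\eta\in\mathcal{A}$, and disjoint $A,B\in\N^{<\infty}$ with $w(A)\leq w(B)$ and $supp(x)\cap(A\cup B)=\emptyset$, the goal being $\|x+t1_{\varepsilon A}\|\leq C\|x+t1_{\eta B}\|$. One may assume $B\neq\emptyset$, since otherwise $w(A)\le w(B)=0$ forces $A=\emptyset$ and the inequality is trivial.

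The key idea is to route from $A$ to $B$ through an auxiliary block placed far to the right of everything in sight, using $w$-left Property $(A)$ for the first leg and $w$-right Property $(A)$ for the second. Concretely, pick $M>\max\big(supp(x)\cup A\cup B\big)$. Because $\sum_n w_n=\infty$, the tail weight $\sum_{n\geq M}w_n$ is infinite, so adjoining indices $M,M+1,\dots$ one at a time produces a finite set whose weight first reaches or exceeds $w(B)$; call this set $D$, let $k$ be the last index adjoined, and set $D'=D\setminus\{k\}$. By construction $A<D$, $B<D'$, $w(A)\leq w(B)\leq w(D)$, and $w(D')<w(B)$. Then $w$-left Property $(A)$, applied to the separated pair $A<D$, gives $\|x+t1_{\varepsilon A}\|\leq C_{la}\|x+t1_{\zeta D}\|$ for any $\zeta\in\mathcal{A}$, while $w$-right Property $(A)$, applied to the separated pair $B<D'$, gives $\|x+t1_{\zeta D'}\|\leq C_{ra}\|x+t1_{\eta B}\|$ (if $D'=\emptyset$ this is exactly the empty-set instance that the convention preceding Lemma~\ref{lem:property A} admits).

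It then remains to absorb the single stray coordinate $k$. Splitting $1_{\zeta D}=1_{\zeta D'}+\zeta_k e_k$ and applying the triangle inequality,
\begin{align*}
\|x+t1_{\varepsilon A}\|\leq C_{la}\|x+t1_{\zeta D}\|\leq C_{la}\big(\|x+t1_{\zeta D'}\|+t\big)\leq C_{la}\big(C_{ra}\|x+t1_{\eta B}\|+t\big),
\end{align*}
using $t\|e_k\|=t$ since the basis is normalized. Finally, choosing any $b\in B$ and noting that $e_b^*(x+t1_{\eta B})=t\eta_b$ because $supp(x)\cap B=\emptyset$, one gets $t\leq\|e_b^*\|\,\|x+t1_{\eta B}\|\leq 2K_b\|x+t1_{\eta B}\|$, where $K_b$ is the basis constant. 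Combining yields $\|x+t1_{\varepsilon A}\|\leq C_{la}(C_{ra}+2K_b)\|x+t1_{\eta B}\|$, establishing $w$-Property $(A)$.

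The main obstacle I anticipate is that one cannot in general choose $D$ to the right with $w(D)$ landing exactly in the window $[w(A),w(B)]$: when that window is shorter than the available weight increments, any such block overshoots $w(B)$, and the naive two-step routing breaks. The device above is designed precisely to sidestep this — I let $D$ overshoot and then strip off only the last-adjoined index, which pulls the weight back below $w(B)$ at the cost of a single unit-norm coordinate that is then controlled by $\|x+t1_{\eta B}\|$. The hypothesis $\sum w_n=\infty$ is what is genuinely used, namely to guarantee the tail can reach weight $w(B)$; the role of $\sup w_n<\infty$ is to keep the overshoot (hence each increment) bounded, although the single-coordinate correction makes the estimate robust even without sharp control of the increment.
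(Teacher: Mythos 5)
Your core construction coincides with the paper's own proof. In its main case ($w(B)>\limsup_n w_n$) the paper chooses $E\in\N^{<\infty}$ and $n\in\N$ with $F=E\cup\{n\}>(A\cup B)$, $supp(x)\cap F=\emptyset$ and $w(E)\leq w(B)\leq w(F)$, applies $w$-left Property $(A)$ to the pair $(A,F)$, strips the single coordinate $e_n$ by the triangle inequality, and applies $w$-right Property $(A)$ to the pair $(B,E)$; your $D$, $D'$, $k$ are exactly the paper's $F$, $E$, $n$, and your constant $C_{la}(C_{ra}+2K_b)$ is the paper's $C^2+2CK_b$. The one genuine (and welcome) difference is that by allowing $D'=\emptyset$ and invoking the stated empty-set convention for $w$-right Property $(A)$, you render the paper's other case ($w(B)\leq\limsup_n w_n$, handled there via Proposition~\ref{Prop:c0 basis}) unnecessary: the paper's case split in effect only serves to guarantee that its set $E$ is nonempty. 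Your preliminary remarks --- that Propositions~\ref{prop:left property} and~\ref{prop:right property} are unavailable when $\inf w_n=0$, and that $\sup w_n<\infty$ plays no essential role in the estimate --- are also accurate.

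There is, however, one genuine gap: your argument tacitly assumes that $supp(x)$ is finite. The theorem quantifies over all $x\in X$, and your first step, ``pick $M>\max(supp(x)\cup A\cup B)$,'' makes no sense when $supp(x)$ is infinite. This is not merely notational: the definitions of $w$-left and $w$-right Property $(A)$ require only $supp(x)\cap(A\cup B)=\emptyset$, so $x$ may be supported on all of $\N\setminus(A\cup B)$, in which case no nonempty finite set $D>(A\cup B)$ disjoint from $supp(x)$ exists at all; and even when $\N\setminus supp(x)$ is infinite, its total weight to the right of $A\cup B$ can be finite and smaller than $w(B)$ (e.g.\ if the weights off $supp(x)$ are summable), so your greedy accumulation never reaches $w(B)$. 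This is precisely why the paper opens its proof by invoking \cite[Lemma 2.4]{BDKOW} to reduce to finitely supported $x$. The repair is routine --- apply your inequality to $P_{[1,N]}(x)$, whose coefficients are still dominated by $t$ and whose support still avoids $A\cup B$, and let $N\to\infty$ --- but as written the proof establishes the inequality only for finitely supported $x$, and some such reduction must be stated. With that sentence added, your proof is complete and correct.
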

\begin{proof}
Observe that from \cite[Lemma 2.4]{BDKOW} it follows that for the proof of this result it
 is sufficient to consider $x\in X$ with $supp(x)<\infty$.
Let $x\in X$, $t\geq \sup_n |e_n^*(x)|,~\varepsilon,\eta,\gamma \in \mathcal{A}$, $A,B\in \N^{<\infty}$ with $w(A)\leq w(B)$, $A\cap B=\emptyset$ and
$supp(x)\cap (A\cup B)=\emptyset$. Let $C$ be the constant for
$w$-left Property $(A)$ and $w$-right Property $(A)$.

If $w(B)\leq \limsup_{n\rightarrow \infty} w_n$ then from
Proposition~\ref{Prop:c0 basis} it follows that $\|1_{\varepsilon
A}\|, \|1_{\eta B}\|\leq 4C$. Thus we get
\begin{align*}
\|x+t1_{\varepsilon A}\|&\leq \|x+t1_{\eta B}\|+\|t1_{\varepsilon A}\|+\|t1_{\eta B}\|\\
&\leq  \|x+t1_{\eta B}\|+8Ct\\
&=  \|x+t1_{\eta B}\|+8C\|t1_{\eta b}\|\\
&\leq  \|x+t1_{\eta B}\|+16CK_b\|x+t1_{\eta B}\|
\end{align*}
where $K_b$ is the basis constant of $(e_n)$ and $b\in B$.

If $w(B)>\limsup_{n\rightarrow \infty} w_n$ then $\sum w_n=\infty$
implies that  we can choose $E\in \N^{<\infty},~n\in\N$ such that $F=\{n\cup
E\}>(A\cup B)$, $w(E)\leq w(B)\leq w(F)$ and $supp(x)\cap F=\emptyset$. Observe that
$w(A)\leq w(F)$ thus from the $w$-left Property $(A)$ and $w$-right
Property $(A)$ of the basis we can write
\begin{align*}
\|x+t1_{\varepsilon A}\|&\leq C \|x+t1_{\gamma F}\|\\
                         &\leq C\{ \|x+t1_{\gamma E}\|+t\|e_n\|\}\\
                         &\leq C\{ C\|x+t1_{\eta B}\|+2K_b\|x+t1_{\eta B}\|\}\\
                         &=(C^2+2CK_b)\|x+t1_{\eta B}\|.
\end{align*}
\end{proof}

\section{weight-conservative and weight-reverse conservative bases}

First, we will prove the following improvement of \cite[Proposition
3.10]{BDKOW}. The proof of \cite[Proposition 3.10]{BDKOW} itself
suggests that we do not need the full power of $w$-superdemocracy of
the basis (see \cite [Definition 3.6]{BDKOW}). The canonical  basis
of the space $c_0$, the space of all real sequences converging to
$0$, is $(e_n)$ where $e_n$ has only one non zero entry at the
$n$-th place and this entry has value equal to $1$.

\begin{Prop}\label{Prop:c0 basis}
Let $(e_n)$ be a basis of a Banach space X. \bla
\item  If $(e_n)$ is $w$-conservative basis with constant $C$ and $A\in \N^{<\infty}$ with
$w(A)\leq \limsup_{n\rightarrow \infty} w_n$ then $max_{\varepsilon \in \mathcal{A}}\|1_{\varepsilon A}\| \leq 4 C$.

\item If $\sup w_n=\infty$ and $(e_n)$ is $w$-conservative basis then $(e_n)$ is equivalent to the canonical  basis of $c_0$.

\item If $\sum w_n<\infty$ and $(e_n)$ is $w$-reverse conservative basis then $(e_n)$ is equivalent to the canonical  basis of $c_0$.

\item If $\inf w_n=0$ and $(e_n)$  $w$-reverse conservative basis then $(e_n)$  contains a subsequence which is equivalent to the canonical  basis of $c_0$.

\el
\end{Prop}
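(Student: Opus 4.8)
For $(a)$ the plan is to split the signed indicator by sign and compare each piece to a heavy pair far to the right. Writing $A^{+}=\{i\in A:\varepsilon_i=1\}$ and $A^{-}=\{i\in A:\varepsilon_i=-1\}$ we have $\|1_{\varepsilon A}\|\le\|1_{A^{+}}\|+\|1_{A^{-}}\|$, and $w(A^{\pm})\le w(A)\le\limsup_n w_n$. Since $\sup_{n\ge N}w_n\ge\limsup_n w_n\ge w(A)$ for every $N$, I can pick two indices $m_1<m_2$ to the right of $\max A$ with $w_{m_1},w_{m_2}>w(A)/2$, so $w(\{m_1,m_2\})>w(A)\ge w(A^{\pm})$. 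Then $w$-conservativity applied to $A^{\pm}<\{m_1,m_2\}$ gives $\|1_{A^{\pm}}\|\le C\|1_{\{m_1,m_2\}}\|\le 2C$, whence $\|1_{\varepsilon A}\|\le 4C$ (the case $A=\emptyset$ being trivial).

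For $(b)$, the point is that $\sup_n w_n=\infty$ forces $\limsup_n w_n=\infty$, so the hypothesis of $(a)$ holds for every finite set and $\|1_{\varepsilon A}\|\le 4C$ for all $A\in\N^{<\infty}$, $\varepsilon\in\mathcal{A}$. Any scalars $(a_i)_{i\in A}$ with $\max_i|a_i|\le 1$ give a point of $[-1,1]^{A}$, the convex hull of $\{(\varepsilon_i)_{i\in A}:\varepsilon\in\mathcal{A}\}$, so $\sum_{i\in A}a_ie_i$ is a convex combination of the vectors $1_{\varepsilon A}$ and $\|\sum_{i\in A}a_ie_i\|\le 4C\max_i|a_i|$. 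The matching lower bound $\max_i|a_i|\le 2K_b\|\sum_{i\in A}a_ie_i\|$ is immediate from the basis constant $K_b$, so $(e_n)$ is equivalent to the canonical basis of $c_0$.

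$(c)$ is where the real difficulty lies: $w$-reverse conservativity only bounds a right set by a heavier left set, and so says nothing directly about the initial segments $\{1,\dots,k\}$ that an upper $c_0$-estimate needs. My plan is to rule out large initial segments by contradiction. If $\sup_k\|1_{\{1,\dots,k\}}\|=\infty$, then from $\|1_{\{1,\dots,k\}}\|\le(j-1)+\|1_{\{j,\dots,k\}}\|$ the co-initial blocks also satisfy $\sup_k\|1_{\{j,\dots,k\}}\|=\infty$ for each fixed $j$; choosing (using $\sum_n w_n<\infty$) an index $j\ge 2$ with $\sum_{n\ge j}w_n\le w_1$ makes every block $\{j,\dots,k\}$ lighter than $\{1\}$ and to its right, so reverse conservativity forces $\|1_{\{j,\dots,k\}}\|\le C$, a contradiction. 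Hence $L:=\sup_k\|1_{\{1,\dots,k\}}\|<\infty$. With $W:=\sum_n w_n$ I then fix $q_0$ with $\sum_{n\ge q_0}w_n\le W/2$: for $A$ with $\min A\ge q_0$ the segment $\{1,\dots,\min A-1\}$ is heavier than $A$ and lies to its left, giving $\|1_A\|\le CL$, while a general $A$ is split at $q_0$ and estimated crudely on the fixed part $A\cap[1,q_0)$. This bounds $\sup_A\|1_A\|$, and the sign-splitting and convex-hull steps from $(b)$ complete the proof.

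Finally, $(d)$ runs the same mechanism on a sparse subsequence. Since $\inf_n w_n=0$ there are indices with arbitrarily small weight beyond any point, so I would extract $n_1<n_2<\cdots$ with $w_{n_{k+1}}\le\tfrac12 w_{n_k}$; this geometric thinning is the one genuine choice in the argument. For finite $G\subseteq\{n_k:k\ge 2\}$ with least element $n_{k^*}$ one gets $w(G)\le\sum_{k\ge k^*}w_{n_k}\le 2w_{n_{k^*}}\le w_{n_{k^*-1}}$, so comparing $G$ with the singleton $\{n_{k^*-1}\}$ to its left gives $\|1_G\|\le C$; treating $n_1$ separately and then applying the sign-splitting and convex-hull steps shows $(e_{n_k})$ is equivalent to the canonical basis of $c_0$.
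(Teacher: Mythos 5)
Your proof is correct, and on parts (a) and (b) it is essentially the paper's argument: sign-split $1_{\varepsilon A}$ into $1_{A^+}$ and $1_{A^-}$ and compare each piece, via $w$-conservativity, with a heavier pair (the paper's $\{n_0,n_1\}$, your $\{m_1,m_2\}$) or singleton to the right. On (c) and (d) you reach the same conclusions by a more circuitous route. In (c), the first step of your contradiction argument already contains the entire proof: once $j$ is chosen with $\sum_{n\ge j}w_n\le w_1$, \emph{every} finite $A\subseteq[j,\infty)$ --- not just the intervals $\{j,\dots,k\}$ --- satisfies $w(A^{\pm})\le w_1$ and $\{1\}<A^{\pm}$, so reverse conservativity gives $\|1_{A^{\pm}}\|\le C\|e_1\|=C$ directly, and the fixed initial part $A\cap[1,j)$ is handled by the triangle inequality; this is exactly the paper's one-line proof, and it makes your $L=\sup_k\|1_{\{1,\dots,k\}}\|$ and $q_0$ machinery redundant. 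In (d), the paper extracts a subsequence $(n_i)$ with $\sum_i w_{n_i}<\infty$, notes (implicitly) that a subsequence of a $w$-reverse conservative basis is reverse conservative for the induced weight, and invokes (c); your geometric-decay extraction with comparisons against the preceding singleton $\{n_{k^*-1}\}$ is a correct self-contained substitute that avoids citing (c). One merit of your write-up over the paper's is that you make explicit the two routine steps the paper leaves implicit: the convex-hull argument passing from $\sup_{A,\varepsilon}\|1_{\varepsilon A}\|<\infty$ to the upper $c_0$-estimate, and the basis-constant lower bound.
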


\begin{proof}
\bla
\item For any $A\in \N^{<\infty}$ with $w(A)\leq \limsup_{n\rightarrow \infty} w_n$, we can choose $n_1>n_0>A$ such that
$w(A)<w_{n_0}+w_{n_1}$. For any $\varepsilon \in \mathcal{A}$ we can
break $A$ into two parts $A^+$ and $A^-$ where $\varepsilon_i=1$ for
all $i\in A^+$ and $\varepsilon_i=-1$ for all $i\in A^-$. Since
$A^+,~A^-<\{n_0,n_1\}$ and $w(A^+),w(A^-)\leq w_{n_0}+w_{n_1}$ we
get $\|1_{\varepsilon A}\|\leq \|1_{A^+}\|+\|1_{A^-}\| \leq
2C\|e_{n_0}+e_{n_1}\|\leq 4C.$

\item For any given $A\in \N^{<\infty}$ we can find $n>A$ with $w(n)>w(A)$. Thus for any $\varepsilon \in \mathcal{A}$  we get $\|1_{\varepsilon A}\|\leq 2C\|e_n\|\leq 2C.$

\item Choose $n\in \N$ such that $\sum_{n+1}^{\infty}w_i<w_1$. If $A\in \N^{<\infty}$ with $A>n$ then $A>1$ and $w(A)<w_1$. Now from the $w$-reverse conservative property
we can write $\|1_{\varepsilon A}\|\leq 2C\|e_1\|\leq 2C$ for all
$\varepsilon \in \mathcal{A}$.

\item Choose $(n_i)_i$ such that $\sum_iw_{n_i}<\infty$. From $(c)$ it follows that $(e_{n_i})$ is equivalent to the canonical unit vector basis of $c_0$.
\el
\end{proof}

In \cite{DKTW} the authors proved that if $w\in c_0$ then any $w$-almost
greedy basis is weakly null. We now prove the similar result for
$w$-reverse conservative basis.

\begin{cor}
If $w\in c_0$ then any $w$-reverse conservative basis is weakly
null.
\end{cor}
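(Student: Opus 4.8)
The plan is to argue by contradiction. Suppose $(e_n)$ is $w$-reverse conservative but not weakly null. Then there is a functional $f\in X^*$ with $f(e_n)\not\to 0$. Since the basis is normalized, $|f(e_n)|\leq \|f\|$ for all $n$, so the real sequence $(f(e_n))$ is bounded; as it does not converge to $0$ we may pass to a subsequence along which $|f(e_n)|$ stays above some $\delta>0$, and then to a further subsequence $(e_{n_k})$ on which $f(e_{n_k})$ has constant sign. Replacing $f$ by $-f$ if necessary, I arrange $f(e_{n_k})\geq \delta>0$ for all $k$.

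Next I would exploit the hypothesis $w\in c_0$. Since $w_{n_k}\to 0$, I can refine once more to a subsequence (still written $(e_{n_k})$) with $\sum_k w_{n_k}<\infty$, for instance by selecting indices with $w_{n_k}\leq 2^{-k}$; this refinement preserves the condition $f(e_{n_k})\geq \delta$. The subsequence $(e_{n_k})$ is a normalized basic sequence, hence a basis of its closed linear span $Y$, and it inherits $w$-reverse conservativity with respect to the weight $(w_{n_k})$: if $A,B$ are finite sets of indices in the enumeration of the subsequence with $B<A$ and $\sum_{k\in A}w_{n_k}\leq \sum_{k\in B}w_{n_k}$, then the actual index sets $\tilde A,\tilde B\subset\N$ satisfy $\tilde B<\tilde A$ and $w(\tilde A)\leq w(\tilde B)$, so the defining inequality for $(e_n)$ gives the required bound with the same constant.

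Now the summable weight $(w_{n_k})$ makes Proposition~\ref{Prop:c0 basis}$(c)$ applicable to $(e_{n_k})$ as a basis of $Y$, yielding that $(e_{n_k})$ is equivalent to the canonical basis of $c_0$. In particular there is a constant $M$ with $\|\sum_{k=1}^{m}e_{n_k}\|\leq M$ for every $m$. Testing against $f$,
\[
m\delta\leq \sum_{k=1}^{m}f(e_{n_k})=f\Big(\sum_{k=1}^{m}e_{n_k}\Big)\leq \|f\|\,\Big\|\sum_{k=1}^{m}e_{n_k}\Big\|\leq \|f\|\,M,
\]
which is impossible for large $m$. This contradiction forces $(e_n)$ to be weakly null.

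The routine points are the two successive passages to subsequences and the straightforward check that reverse conservativity is inherited by a subsequence. The only genuine content is the clash between the linear lower bound $\|\sum_{1}^{m}e_{n_k}\|\geq m\delta/\|f\|$ forced by a nontrivial weak cluster value and the uniform $c_0$ upper bound supplied by Proposition~\ref{Prop:c0 basis}$(c)$; the hypothesis $w\in c_0$ is precisely what lets me place the offending subsequence inside a summable-weight block so that part $(c)$ becomes available. I expect the main thing to get right is ensuring the weight-summability refinement can be carried out simultaneously with the sign and magnitude control coming from $f$, which is immediate since both are achieved by passing to further subsequences.
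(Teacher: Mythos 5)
Your proof is correct and follows essentially the same route as the paper: argue by contradiction, extract a subsequence on which $|f|$ stays above $\delta>0$, refine (using $w\in c_0$) to a summable-weight subsequence that inherits reverse conservativity, and invoke Proposition~\ref{Prop:c0 basis}(c). The only difference is cosmetic: where the paper closes the argument abstractly (a sequence equivalent to the canonical $c_0$ basis is weakly null, so $(e_{n_k})$ could have no such subsequence), you close it quantitatively via the uniform bound on $\bigl\|\sum_{k=1}^m e_{n_k}\bigr\|$ against the linear lower bound $m\delta/\|f\|$, which is a slightly more self-contained way of saying the same thing.
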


\begin{proof}
First observe that if $w\in c_0$ then any subsequence $(w_{n_k})$
contains a further sequence $(w_{n_{k_i}})$ such that
$\sum_iw_{n_{k_i}}<\infty$.  Also if $(e_n)$ is $w$-reverse
conservative basis then $(e_{n_k})$ is $w'$-reverse conservative where $w'=(w_{n_k})$.

Let $w\in c_0$ and $(e_n)$ be a $w$-reverse conservative basis of
$X$ which is not weakly null. Then there exists a $f\in X^*$ such
that $f(x_n)\not\longrightarrow 0$. We can find a subsequence
$(e_{n_k})$ such that $|f(e_{n_k})|\longrightarrow \de >0$. This
implies that $(e_{n_k})$ doesn't contains any weakly null sequence
and thus there is no further subsequence of $(e_{n_k})$ which is
equivalent to the canonical unit vector basis of $c_0$.
Proposition~\ref{Prop:c0 basis}(c) implies that this contradicts
the $w$-reverse conservative property of $(e_n)$.
\end{proof}

\begin{Prop}
Let $w\in c_0$ and $(e_n)$ is both $w$-reverse conservative and
conservative. Then $(e_n)$ is equivalent to the canonical basis of $c_0$.
\end{Prop}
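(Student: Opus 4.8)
The plan is to establish the two–sided estimate
\[
\tfrac{1}{2K_b}\sup_i|a_i|\ \le\ \Big\|\sum_i a_ie_i\Big\|\ \le\ M\sup_i|a_i|
\]
for all finitely supported scalar sequences $(a_i)$, where $K_b$ is the basis constant; this is exactly equivalence with the canonical basis of $c_0$. The left inequality is automatic: since $(e_n)$ is normalized with basis constant $K_b$, the coordinate functionals satisfy $\|e_i^*\|\le 2K_b$, so $|a_i|=\big|e_i^*(\sum_j a_je_j)\big|\le 2K_b\|\sum_j a_je_j\|$, giving $\sup_i|a_i|\le 2K_b\|\sum_i a_ie_i\|$. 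Hence everything reduces to the upper estimate, and for that the crucial intermediate goal is a \emph{uniform} bound $\sup\{\|1_A\|:A\in\N^{<\infty}\}<\infty$.

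First I would produce this uniform bound. Since $w\in c_0$ we have $\inf_n w_n=0$, so Proposition~\ref{Prop:c0 basis}$(d)$ (using the $w$–reverse conservative hypothesis) yields a subsequence $(e_{n_k})$ equivalent to the canonical basis of $c_0$; in particular there is a constant $D$ with $\|1_S\|\le D$ for every finite $S\subset\{n_k:k\in\N\}$. Now let $A\in\N^{<\infty}$ be arbitrary and let $C$ be the conservative constant. Because $\{n_k\}$ is infinite I can choose $B\subset\{n_k\}$ with $A<B$ and $|B|=|A|$, and then the conservative property gives $\|1_A\|\le C\|1_B\|\le CD$. Thus $\|1_A\|\le CD=:M_0$ for \emph{every} finite set $A$, no matter where it sits in $\N$. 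This is precisely where both hypotheses are genuinely needed: the $w$–reverse conservativity (through $w\in c_0$) manufactures the $c_0$–tail, while the conservativity transports its bound leftward to arbitrary finite sets.

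Finally I would pass from this bound on indicator sums to arbitrary coefficients, and this is the step I expect to be the main obstacle, since I have no unconditionality available and therefore cannot average over signs. I would circumvent this with a layer–cake representation, which needs no unconditionality. Writing $a_i=a_i^+-a_i^-$ and setting $A_t^{+}=\{i:a_i>t\}$, $A_t^{-}=\{i:a_i<-t\}$, the elementary coordinatewise identity $s=\int_0^\infty \mathbf 1_{\{t<s\}}\,dt$ gives $\sum_i a_i^+e_i=\int_0^{\infty}1_{A_t^+}\,dt$ and $\sum_i a_i^-e_i=\int_0^{\infty}1_{A_t^-}\,dt$ (finite sums of $X$–valued step functions of $t$, since the support is finite, so there is no analytic subtlety in the integral). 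Taking norms inside the integral,
\[
\Big\|\sum_i a_ie_i\Big\|\le\int_0^{\infty}\!\|1_{A_t^+}\|\,dt+\int_0^{\infty}\!\|1_{A_t^-}\|\,dt\le M_0\max_i a_i^+ + M_0\max_i a_i^- \le 2M_0\sup_i|a_i|,
\]
because $\|1_{A_t^{\pm}}\|\le M_0$ whenever $A_t^{\pm}\neq\emptyset$ and $A_t^{\pm}=\emptyset$ once $t$ exceeds $\max_i a_i^{\pm}$. This yields the upper estimate with $M=2M_0=2CD$, which together with the automatic lower bound completes the equivalence with the $c_0$ basis.
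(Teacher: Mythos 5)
Your proof is correct, and its core is the same as the paper's: use $w\in c_0$ to manufacture, arbitrarily far to the right, finite sets whose indicator sums have uniformly bounded norm, and then transport that bound leftward to an arbitrary finite set $A$ via the unweighted conservative property. The differences are in packaging rather than substance. For the right-hand bound, the paper picks a set $D>A$ with $|D|\ge |A|$ and $w(D)<w_1$ inside a subsequence with summable weights and bounds $\|1_D\|\le C_1\|e_1\|$ by a single direct application of $w$-reverse conservativity, whereas you invoke Proposition~\ref{Prop:c0 basis}(d) as a black box; since that proposition is itself proved by exactly this argument, the two routes coincide. Where you genuinely add value is at the finish: the paper stops after obtaining $\|1_{\varepsilon A}\|\le 2C_1C_2$ by splitting into positive and negative parts, leaving implicit both the lower estimate and the standard passage from signed indicator bounds to arbitrary bounded coefficients, while you make the lower bound explicit via the coordinate functionals and carry out the upper bound for general coefficients with the layer-cake representation $\sum_i a_i^{+}e_i=\int_0^{\infty}1_{A_t^{+}}\,dt$, which needs no unconditionality and only unsigned indicators. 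So your write-up is more self-contained than the paper's, at the cost of a slightly worse constant; the underlying mechanism is identical.
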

\begin{proof}
Let $A\in \N^{<\infty}$ and $C_1$, $C_2$ be the constant for
$w$-reverse conservative and conservative property respectively. Since $w\in
c_0$, we can find a subsequence $(e_{n_i})$ such that $\sum_i
e_{n_i}<\infty$. Choose $N>A$ such that
$\sum_{N+1}^{\infty}w_{n_i}<w_1$. Now we can find a finite set $D\subset \{n_i:i\geq N+1\}$
with $|D|\geq |A|$ and $w(D)<w_1$.

From the conservative and $w$-reverse conservative property we have
$\|1_A\|\leq C_2\|1_D\|\leq C_1C_2\|e_1\|=C_1C_2$.

Thus for any $A\in \N^{<\infty}$ and $\varepsilon \in \mathcal{A}$ we
have $\|1_{\varepsilon A}\|\leq 2C_1C_2$ and this concludes the
proof.
\end{proof}

Similar arguments as in section $3$ yields the following results.
\begin{thm}
Let $w$ be a weight sequence with $\sum w_n=\infty$ and $\sup
w_n<\infty$. If a basis $(e_n)$ is both $w$-conservative and
$w$-reverse conservative then  $(e_n)$ is $w$-democratic.
\end{thm}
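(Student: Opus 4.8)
The plan is to mirror the case analysis in the proof of Theorem~\ref{infinitesum}, replacing the two uses of $w$-left and $w$-right Property $(A)$ by the $w$-conservative and $w$-reverse conservative properties, and dropping the perturbing vector $x$ entirely — so no finite-support reduction via \cite[Lemma 2.4]{BDKOW} is needed. Let $C$ be a common constant for both the $w$-conservative and $w$-reverse conservative properties, let $K_b$ be the basis constant, and, as in the previous proofs, normalize so that $\sup w_n=1$ (this only serves to make $\limsup_n w_n$ a finite threshold). Fix $A,B\in\N^{<\infty}$ with $w(A)\le w(B)$; the goal is a bound $\|1_A\|\le C'\|1_B\|$ with $C'$ independent of $A,B$. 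Since $(e_n)$ is normalized, the coordinate projection $P_{\{b\}}$ gives $1=\|e_b\|\le 2K_b\|1_B\|$ for any $b\in B$, i.e. $\|1_B\|\ge 1/(2K_b)$, a lower bound I will use in both cases.

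First I would treat the case $w(B)\le\limsup_n w_n$. Here $w(A)\le w(B)\le\limsup_n w_n$, so Proposition~\ref{Prop:c0 basis}$(a)$ applies to $A$ and yields $\|1_A\|\le 4C$. Combining this with $\|1_B\|\ge 1/(2K_b)$ gives $\|1_A\|\le 4C\le 8CK_b\|1_B\|$, exactly paralleling the first case of Theorem~\ref{infinitesum}.

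The substantive case is $w(B)>\limsup_n w_n$. Because $\sum w_n=\infty$ while $\sup w_n<\infty$, I can build a finite block far to the right of $A\cup B$: enumerating indices $m_1<m_2<\cdots$ all exceeding $\max(A\cup B)$, the partial weights diverge, so there is a least $k$ with $\sum_{i=1}^{k}w_{m_i}\ge w(B)$; setting $n=m_k$, $E=\{m_1,\ldots,m_{k-1}\}$ and $F=\{n\}\cup E$ gives $F>(A\cup B)$ with $w(E)\le w(B)\le w(F)$. Then $A<F$ and $w(A)\le w(B)\le w(F)$, so $w$-conservativity gives $\|1_A\|\le C\|1_F\|$; and $B<E$ with $w(E)\le w(B)$, so $w$-reverse conservativity gives $\|1_E\|\le C\|1_B\|$. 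Finally $\|1_F\|\le\|1_E\|+\|e_n\|\le C\|1_B\|+1\le (C+2K_b)\|1_B\|$, whence $\|1_A\|\le (C^2+2CK_b)\|1_B\|$, matching the constant obtained in Theorem~\ref{infinitesum}.

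Taking $C'=\max\{8CK_b,\;C^2+2CK_b\}$ over the two cases establishes $w$-democracy. The only delicate point is the block construction in the main case: one must check that $\sum w_n=\infty$ together with $\sup w_n<\infty$ really produces a set $E$ lying strictly to the right of $A\cup B$ with weight in the window $(w(B)-1,w(B)]$, so that one further index $n$ achieves $w(E)\le w(B)\le w(F)$ while preserving $F>(A\cup B)$. Since this is precisely the mechanism already used in Theorem~\ref{infinitesum}, I expect it to transfer verbatim, and indeed the whole argument is the democratic shadow of that theorem.
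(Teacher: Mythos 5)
Your proof is correct and follows essentially the same route as the paper's: the same two-case split on whether $w(B)\leq\limsup_n w_n$ (handled via Proposition~\ref{Prop:c0 basis}$(a)$ plus the coordinate-projection lower bound on $\|1_B\|$) or $w(B)>\limsup_n w_n$ (handled by building a block $F=\{n\}\cup E$ to the right of $A\cup B$ with $w(E)\leq w(B)\leq w(F)$ and applying $w$-conservativity to the pair $(A,F)$ and $w$-reverse conservativity to the pair $(E,B)$). The only differences are cosmetic: you spell out the greedy construction of $E$ that the paper leaves implicit, and your constants carry an extra factor of $2$ from the projection bound $\|P_{\{b\}}\|\leq 2K_b$.
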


\begin{Prop}\label{prop:equivalence cr}
Let $w$ be a weight sequence with $0<\inf w_n\leq \sup w_n <\infty$.
Then basis $(e_n)$ of a Banach space X is $w$-conservative
($w$-reverse conservative) if and only if $(e_n)$ is conservative
(reverse conservative).
\end{Prop}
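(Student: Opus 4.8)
The plan is to mirror the arguments of Propositions~\ref{prop:left property} and~\ref{prop:right property}, observing that the present statement is the same kind of weight-versus-cardinality equivalence but now at the level of the indicator vectors $1_A$, with no ambient vector $x$ and no signs to track. Consequently the plain triangle inequality $\|1_A\|\leq\sum_i\|1_{A_i}\|$ takes the place of the more delicate estimates used there (no rescaling of $x$ by $1/N$ and no error terms $(N-1)\|x\|$ arise). As in those proofs I would first normalize so that $\sup w_n=1$ and write $\alpha=\inf w_n>0$, and I would carry out the $w$-conservative case in full; the $w$-reverse conservative case is then identical after replacing the order relation $A<B$ by $B<A$ throughout. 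The only point to verify in that transfer is that every block $A_i\subseteq A$ inherits the required relation ($A_i<B$, respectively $B<A_i$) automatically, which it does since $A_i\subseteq A$ and the ordering depends only on $\min$ and $\max$.

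For the implication conservative $\Rightarrow$ $w$-conservative I would take $A,B\in\N^{<\infty}$ with $A<B$ and $w(A)\leq w(B)$. From $\alpha|A|\leq w(A)\leq w(B)\leq|B|$ I obtain $|A|\leq\frac{1}{\alpha}|B|$, so $A$ can be split into $N\leq 1+\frac{1}{\alpha}$ blocks $A_1,\dots,A_N$ with $|A_i|\leq|B|$ and $A_i<B$ for each $i$. Applying conservativity to each pair $(A_i,B)$ and summing gives $\|1_A\|\leq\sum_{i=1}^N\|1_{A_i}\|\leq NC\|1_B\|\leq(1+\frac{1}{\alpha})C\|1_B\|$. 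This direction is routine and needs no case distinction.

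The reverse implication $w$-conservative $\Rightarrow$ conservative is where the one genuine obstacle lies. Given $A<B$ with $|A|\leq|B|$, I have $w(A)\leq|A|\leq|B|\leq\frac{1}{\alpha}w(B)$, but to invoke $w$-conservativity I must partition $A$ into blocks of \emph{weight} at most $w(B)$, and a greedy packing controls the number of blocks only when the bin capacity $w(B)$ is not too small. I would therefore split into two cases, exactly as in the proof of Proposition~\ref{prop:left property}. When $w(B)\leq 2$, both $|A|$ and $|B|$ are bounded by $2/\alpha$, and a crude estimate suffices: since the basis is normalized with basis constant $K_b$ one has $\|1_A\|\leq|A|\leq 2/\alpha$, while $1=|e_b^*(1_B)|\leq 2K_b\|1_B\|$ for any $b\in B$ gives $\|1_B\|\geq\frac{1}{2K_b}$, whence $\|1_A\|\leq\frac{4K_b}{\alpha}\|1_B\|$. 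When $w(B)>2$, every singleton has weight $\leq 1<w(B)/2$, so a greedy partition of $A$ into blocks of weight $\leq w(B)$ yields blocks each of weight $>w(B)/2$ (except possibly the last), giving $N\leq 1+\frac{2w(A)}{w(B)}\leq 1+\frac{2}{\alpha}$ blocks with $A_i<B$ and $w(A_i)\leq w(B)$; $w$-conservativity and the triangle inequality then give $\|1_A\|\leq NC\|1_B\|\leq(1+\frac{2}{\alpha})C\|1_B\|$.

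The main difficulty is thus purely combinatorial bookkeeping in this second implication: guaranteeing that the weight-partition uses boundedly many blocks. This fails for small $w(B)$, which is precisely why the small-capacity subcase has to be absorbed separately by the crude norm bound coming from normalization and the basis constant. Everything else follows the template already established for $w$-left and $w$-right Property $(A)$, and the $w$-reverse conservative statement is obtained verbatim with $B<A$ in place of $A<B$.
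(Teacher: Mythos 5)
Your proof is correct and follows essentially the route the paper intends: the paper gives no separate proof of Proposition~\ref{prop:equivalence cr}, saying only that it follows by ``similar arguments as in section 3,'' i.e.\ by adapting the proof of Proposition~\ref{prop:left property}, which is exactly what you do (triangle inequality on blocks in place of the $x/N$ rescaling). Your two-case treatment of the harder direction --- the crude bound $\|1_A\|\leq |A|\leq 2/\alpha$ together with $\|1_B\|\geq \frac{1}{2K_b}$ when $w(B)\leq 2$, and the greedy weight-partition into at most $1+\frac{2}{\alpha}$ blocks when $w(B)>2$ --- correctly fills in the details the paper leaves implicit (indeed slightly more carefully, since you note only the non-final blocks have weight exceeding $w(B)/2$).
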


\begin{thm}
Let $w$ be a weight sequence with $0<\inf w_n\leq \sup w_n <\infty$.
If a basis $(e_n)$ is both $w$-conservative and $w$-reverse
conservative then  $(e_n)$ is $w$-democratic.
\end{thm}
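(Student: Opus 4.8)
The plan is to mirror the proof of the bounded--weight $w$-Property $(A)$ theorem: first strip the weight off the hypotheses, then prove the unweighted democracy estimate by an order--splitting argument in the spirit of Lemma~\ref{lem:property A}, and finally reinstate the weight by a partition argument. Normalize so that $0<\al=\inf_n w_n\le \sup_n w_n=1$. By Proposition~\ref{prop:equivalence cr}, the hypotheses that $(e_n)$ is $w$-conservative and $w$-reverse conservative are equivalent to $(e_n)$ being conservative and reverse conservative; let $C_1$ and $C_2$ be the respective (unweighted) constants and let $K_b$ be the basis constant. Because $w$-democracy only compares $\|1_A\|$ with $\|1_B\|$, I would first establish the unweighted estimate $\|1_A\|\le D\|1_B\|$ for all $A,B\in\N^{<\infty}$ with $|A|\le|B|$, and then convert it to the weighted statement. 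Passing to the unweighted properties first is essential, since the decomposition below controls the pieces only by cardinality, not by weight.

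For the unweighted estimate, fix $A,B$ with $|A|=p\le q=|B|$; note that, unlike in Lemma~\ref{lem:property A}, here $A$ and $B$ need not be disjoint. I would look for a real threshold $\theta\notin A\cup B$ for which the counting function $G(\theta)=|\{a\in A:a\le\theta\}|+|\{b\in B:b\le\theta\}|$ lands in $[p,q]$. Setting $A_1=\{a\in A:a\le\theta\}$, $A_2=A\setminus A_1$, $B_1=\{b\in B:b>\theta\}$ and $B_2=B\setminus B_1$, the condition $G(\theta)\in[p,q]$ is exactly what forces $|A_1|\le|B_1|$ and $|A_2|\le|B_2|$, while the threshold makes $A_1<B_1$ and $B_2<A_2$ hold strictly (so these matched pairs are automatically disjoint even when $A\cap B\neq\emptyset$). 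Applying conservativeness to $A_1<B_1$, reverse conservativeness to $B_2<A_2$, and the basis--constant bounds $\|1_{B_1}\|,\|1_{B_2}\|\le(1+K_b)\|1_B\|$ for these order--initial and order--final pieces of $B$, one gets $\|1_A\|\le\|1_{A_1}\|+\|1_{A_2}\|\le (C_1+C_2)(1+K_b)\|1_B\|$. Now $G$ increases from $0$ to $p+q$ in jumps of size at most $2$, a jump of $2$ occurring precisely at a point of $A\cap B$, so such a $\theta$ always exists unless $p=q$ and a single common element $v$ sits exactly at the crossing. I expect this exceptional overlap configuration to be the main obstacle, as it is the one place where the clean order split fails. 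I would dispatch it by peeling off $v$ as a middle singleton, writing $A=A_{<}\sqcup\{v\}\sqcup A_{>}$ and $B=B_{<}\sqcup\{v\}\sqcup B_{>}$ with $|A_{<}|=|B_{>}|$ and $|A_{>}|=|B_{<}|$, applying conservativeness to $A_{<}<v<B_{>}$ and reverse conservativeness to $B_{<}<v<A_{>}$, and absorbing $\|e_v\|=1\le 2K_b\|1_B\|$. This keeps the whole argument dimension--independent, so it also covers a finite basis, just as Lemma~\ref{lem:property A} does.

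Finally I would reinstate the weight exactly as in the bounded--weight $w$-Property $(A)$ theorem. Given $A,B$ with $w(A)\le w(B)$, the normalization gives $\al|A|\le w(A)\le w(B)\le|B|$, hence $|A|\le|B|/\al$, so $A$ can be partitioned into $N\le 1+\tfrac1\al$ blocks $A_1,\dots,A_N$ each of cardinality at most $|B|$. Applying the unweighted democracy estimate to each block and summing yields $\|1_A\|\le\sum_{i=1}^N\|1_{A_i}\|\le N\,D\,\|1_B\|\le (1+\tfrac1\al)D\,\|1_B\|$, which is precisely $w$-democracy; note that no disjointness between the blocks and $B$ is needed, because the unweighted estimate of the previous step already handles overlaps. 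The only genuinely new ingredient beyond the section 3 template is the overlap handling in the middle step; everything else is a direct transcription of the arguments already used for $w$-Property $(A)$.
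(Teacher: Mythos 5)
Your proof is correct, and its outer skeleton coincides with the paper's: normalize $0<\al=\inf w_n\le \sup w_n=1$, use Proposition~\ref{prop:equivalence cr} to replace the weighted hypotheses by conservativeness and reverse conservativeness, establish unweighted democracy, and then reinstate the weight by partitioning $A$ into $N\le 1+\frac{1}{\al}$ blocks of cardinality at most $|B|$. The genuine difference lies in the middle step: the paper does not prove that a conservative and reverse conservative basis is democratic, it simply cites \cite[Lemma 2.8]{DK} for this implication, whereas you prove it from scratch via the counting-function/threshold argument, producing the split $A=A_1\cup A_2$, $B=B_1\cup B_2$ with $A_1<B_1$, $B_2<A_2$, $|A_1|\le |B_1|$, $|A_2|\le |B_2|$, and disposing of the single obstruction (namely $|A|=|B|$ with one common element sitting exactly at the crossing) by peeling it off as a middle singleton. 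Your argument is sound: the jump analysis of $G$ is right, the matched cardinalities in the exceptional case do come out as $|A_<|=|B_>|$ and $|A_>|=|B_<|$, and the basis-constant bounds $\|1_{B_1}\|,\|1_{B_2}\|\le (1+K_b)\|1_B\|$ and $\|e_v\|\le 2K_b\|1_B\|$ are valid since $B_1,B_2$ and $\{v\}$ are coordinate intervals of $B$. What your route buys is self-containedness, an explicit constant of the form $(C_1+C_2)(1+K_b)+2K_b$, and a correct treatment of overlapping $A$ and $B$ --- a point that genuinely requires care, since the paper's definition of democracy does not assume disjointness and the paper's own splitting result, Lemma~\ref{lem:property A}, is stated only for disjoint sets; what the paper's route buys is brevity, by outsourcing exactly this step to \cite{DK}. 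Both treatments are dimension-free and hence apply to finite bases as well.
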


\begin{proof}
Let $(e_n)$ is both $w$-conservative, $w$-reverse conservative and
$0<\al=\inf w_n \leq\sup w_n=1$. From
Proposition~\ref{prop:equivalence cr} it follows that the basis is
both conservative and reverse conservative. Now from \cite[Lemma
2.8]{DK} it follows that the basis is democratic. Let $C$ be the
constant of democracy.

Let $A,~B\in \N^{<\infty}$ with $w(A)\leq w(B)$. Thus $|A|\leq
{\frac{1}{\al}}|B|$ and we can write $A=\cup_{1}^N A_i$ where
$|A_i|\leq |B|$ and $N\leq 1+ {\frac{1}{\al}}$. Now from the
democracy property of the basis we get
\begin{align*}
\|1_A\|\leq \sum_1^N\|1_{A_i}\|\leq CN\|1_B\|\leq
C(1+{\frac{1}{\al}})\|1_B\|
\end{align*}
and this proves that the basis is $w$-democratic.
\end{proof}

\section*{Acknowledgements}
The author is thankful to Gideon Schechtman for numerous useful discussions.

\end{document}